\newcommand{\dx}{~\mathrm{d}x}
\newcommand{\sym}{\mathrm{sym}}
\newcommand{\Id}{\mathrm{Id}}
\newcommand{\R}{\mathbb{R}}
\def\endproof{\hspace*{\fill}\mbox{\ \rule{.1in}{.1in}}\medskip }
\newcommand*{\dbar}[1]{\bar{\bar{#1}}}
\numberwithin{equation}{section}
\theoremstyle{plain}
\newtheorem{theorem}{Theorem}[section]
\newtheorem{lemma}[theorem]{Lemma}
\newtheorem{corollary}[theorem]{Corollary}
\newtheorem{conj}[theorem]{Conjecture}
\theoremstyle{definition}
\begin{document}
\title[Improved regularity in dimension two and arbitrary
codimension]{The Monge-Amp\`ere system: \\convex integration with
  improved regularity \\ in dimension two and arbitrary codimension}
\author{Marta Lewicka}
\address{M.L.: University of Pittsburgh, Department of Mathematics, 
139 University Place, Pittsburgh, PA 15260}
\email{lewicka@pitt.edu} 

\date{\today}
\thanks{M.L. was partially supported by NSF grant DMS-2006439. 
AMS classification: 35Q74, 53C42, 35J96, 53A35}

\begin{abstract}
We prove a convex integration result for the
Monge-Amp\`ere system in dimension $d=2$ and arbitrary codimension $k\geq 1$.
We achieve flexibility up to the H\"older regularity $\mathcal{C}^{1,\frac{1}{1+
  4/k}}$, improving hence the previous $\mathcal{C}^{1,\frac{1}{1+ 6/k}}$
regularity that followed from flexibility up to
$\mathcal{C}^{1,\frac{1}{1+d(d+1)/k}}$ in \cite{lew_conv}, valid for
any $d,k\geq 1$. Our result agrees with flexibility up to
$\mathcal{C}^{1,\frac{1}{5}}$ for $d=2, k=1$ obtained in \cite{CS}, as
well as with the $\mathcal{C}^{1,\alpha}$ result in \cite{Kallen} where $\alpha\to 1$ as
$k\to\infty$.

\end{abstract}

\maketitle
\tableofcontents

\section{Introduction}

In this paper we present an improved convex integration result for the
following Monge-Amp\`ere system in dimension $d=2$ and arbitrary codimension $k\geq 1$:
\begin{equation}\label{MA}
\begin{split}
& \mathfrak{Det}\,\nabla^2 v= f \quad \mbox{ in }\;\omega\subset\R^2,\\
& \mbox{where }\; \mathfrak{Det}\,\nabla^2 v =
\langle \partial_{11}v, \partial_{22}v\rangle -
\big|\partial_{12} v\big|^2\quad \mbox{ for }\; v:\omega\to\R^k
\end{split}
\end{equation}
In \cite{lew_conv} we studied the operator
$\mathfrak{Det}\,\nabla^2 v$ for arbitrary $d,k\geq 1$ and proved its
{\em flexibility up to $\mathcal{C}^{\frac{1}{1+d(d+1)/k}}$}, in the sense of
Theorem \ref{th_CI_weakMA} below. For $d=2$, this means flexibility up
to $\mathcal{C}^{\frac{1}{1+6/k}}$, and the main purpose of the
present paper is to increase the H\"older exponent to have $\mathcal{C}^{\frac{1}{1+4/k}}$.
Our result also generalizes \cite[Theorem 1.1]{lewpak_MA} and \cite[Theorem 1.1]{CS},
where flexibility for (\ref{MA}) in dimensions $d=2$, $k=1$ was shown
up to $\mathcal{C}^{1,\frac{1}{7}}$ and $\mathcal{C}^{1,\frac{1}{5}}$, respectively.

\bigskip

\noindent The closely related problem is that of isometric
immersions of a given Riemannian metric $g$:
\begin{equation}\label{II}
\begin{split}
& (\nabla u)^T\nabla u = g \quad\mbox{ in }\;\omega,
\\ &  \mbox{for }\; u:\omega\to \R^{2+k}.
\end{split}
\end{equation} 
which reduces to (\ref{MA}) upon
taking the family of metrics $\{g_\epsilon=\Id_2+\epsilon A\}_{\epsilon\to 0}$,
each a small perturbation of $\Id_2$, making an ansatz $u_\epsilon =
\mbox{id}_2+ \epsilon v +\epsilon^2 w$, and gathering the lowest
order terms in the $\epsilon$-expansions. This leads to the following system:
\begin{equation}\label{VK}
\begin{split}
& \frac{1}{2}(\nabla v)^T\nabla v + \sym \nabla w = A\quad\mbox{ in }\;\omega,\\
& \mbox{for }\; v:\omega\to \R^k, \quad w:\omega\to\R^2.
\end{split}
\end{equation}
On simply connected $\omega$, the above system is then equivalent to 
$\mathfrak{Det}\,\nabla^2 v= -\mbox{curl}\,\mbox{curl}\, A$,
reflecting the agreement of the Gaussian curvatures of $g_\epsilon$ and
of the surface $u_\epsilon (\omega)$ at their lowest order terms in $\epsilon$, and 
bringing us back to (\ref{MA}).

\bigskip

\noindent The special case of $k=1$ finds application in the theory of
elasticity, where the left hand side of (\ref{VK}) represents the von
K\'arm\'an stretching content  $\frac{1}{2}\nabla v\otimes \nabla v + \sym \nabla w$,
written in terms of the scalar out-of-plane displacement $v$ and the
in-plane displacement $w$ of the mid-plane $\omega$ in a thin
film. Then, (\ref{MA}) reduces to the scalar Monge-Amp\`ere equation
$\det\nabla^2v=-\mbox{curl}\,\mbox{curl}\, A =f$,
studied in our previous work \cite{lewpak_MA}. We refer the reader to
\cite{lew_conv} for a complete discussion of the relation among
(\ref{MA}), (\ref{II}) and (\ref{VK}), in the general case of
arbitrary $d$ and $k$.

\bigskip

\noindent Our main result states that any $\mathcal{C}^1$-regular pair $(v,w)$ 
which is a subsolution of (\ref{VK}), can be uniformly approximated
by a sequence of solutions $\{(v_n,w_n)\}_{n=1}^\infty$ of regularity
$\mathcal{C}^{1,\alpha}$, as follows:

\begin{theorem}\label{th_final}
Let $\omega\subset\R^2$ be an open, bounded domain. Given
$v\in\mathcal{C}^1(\bar\omega,\R^k)$, $w\in\mathcal{C}^1(\bar\omega,\R^2)$ and
$A\in\mathcal{C}^{0,\beta}(\bar\omega,\R^{2\times
  2}_\sym)$ for some $\beta\in (0,2)$, assume that:
\begin{equation}\label{baqara}
A-\big(\frac{1}{2}(\nabla v)^T\nabla v + \sym\nabla
w\big) >c\,\Id_2 \; \mbox{ on  } \; \bar\omega,
\end{equation}
for some $c>0$, in the sense of matrix inequalities. Fix $\epsilon>0$ and let:
$$0<\alpha<\min\Big\{\frac{\beta}{2},\frac{1}{1+\frac{4}{k}}\Big\}.$$ 
Then, there exists $\tilde v\in\mathcal{C}^{1,\alpha}(\bar \omega,\R^k)$ and
$\tilde w\in\mathcal{C}^{1,\alpha}(\bar\omega,\R^2)$ such that the following holds:
\begin{align*}
& \|\tilde v - v\|_0\leq \epsilon, \quad \|\tilde w - w\|_0\leq \epsilon,
\tag*{(\theequation)$_1$}\refstepcounter{equation} \label{FFbound1}\vspace{1mm}\\ 
& A -\big(\frac{1}{2}(\nabla \tilde v)^T\nabla \tilde v + \sym\nabla
\tilde w\big) =0 \quad \mbox{ in }\;\bar\omega.
\tag*{(\theequation)$_2$} \label{FFbound2}
\end{align*}
\end{theorem}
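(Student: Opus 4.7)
The plan is to prove Theorem \ref{th_final} via a Nash--Kuiper style convex integration scheme, building a sequence $(v_q,w_q)_{q\geq 0}$ of smooth approximate solutions of (\ref{VK}) whose deficits
$$D_q := A - \Big(\frac{1}{2}(\nabla v_q)^T\nabla v_q + \sym\nabla w_q\Big)$$
decay geometrically to $0$ in $\mathcal{C}^0$, while the $\mathcal{C}^1$ and $\mathcal{C}^2$ norms grow at controlled rates. The subsolution condition (\ref{baqara}) guarantees that one can start the iteration with $D_0>0$ pointwise, and the limit pair $(\tilde v,\tilde w)$ will lie in $\mathcal{C}^{1,\alpha}$ and satisfy \ref{FFbound2} exactly; the uniform closeness \ref{FFbound1} is secured by taking the initial oscillation frequency large enough.

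First I would isolate a \emph{stage proposition}: given $(v,w)$ smooth with strictly positive deficit $D$, parameters $\ell$ (mollification length) and $\lambda$ (frequency) with $\ell\lambda\gg 1$, it produces $(\tilde v,\tilde w)$ with $\|\tilde v-v\|_0+\lambda^{-1}\|\tilde v-v\|_1\lesssim \|D\|_0^{1/2}$, $\|\tilde v-v\|_2\lesssim \lambda\|D\|_0^{1/2}$, and $\|\tilde D\|_0$ reduced by a definite fraction. The corrections are of the form $\tilde v = v + \lambda^{-1}\sum_{i} a_i(x)\eta_i\,\Gamma_i(\lambda\xi_i\cdot x)$, with $\xi_i\in\R^2$ unit vectors, $\eta_i\in\R^k$ unit vectors chosen mutually orthogonal (which is where codimension $k$ pays off), and $a_i^2(x)$ amplitudes so that $\sum_i a_i^2\,\xi_i\otimes\xi_i$ matches the mollified deficit $D_\ell$; the $w$-correction cancels the low-frequency remainders. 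This reduces to the earlier Theorem \ref{th_CI_weakMA} framework except that the number of primitive oscillations per stage is cut from $d(d+1)=6$ to $4$, which is the crux of the improvement.

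The improvement itself I would obtain by exploiting two features specific to $d=2$: the symmetric deficit has only $3$ independent entries, and the bilinear structure $\frac12(\nabla v)^T\nabla v$ allows a two-layer \emph{substage} in which the first layer, using $2$ primitives with orthogonal $\eta$-directions, creates a correction whose product with itself generates (via the identity $\sin^2 = \frac12(1-\cos 2)$) a rank-one contribution that can be absorbed into the second layer. Using two such substages, one covers a full basis of $2\times 2$ symmetric matrices with $4$ primitives rather than $6$. The hard part is the bookkeeping: the commutator and mollification errors between substages must be absorbed into $\|\tilde D\|_0$ without deteriorating the $\mathcal{C}^2$ bound $\|\tilde v\|_2\lesssim \lambda\|D\|_0^{1/2}$; this requires the orthogonality of the $\eta_i$'s (so that cross-terms $\langle \partial \tilde v_{i},\partial\tilde v_{j}\rangle$ vanish when $i\neq j$) together with sharp stationary-phase-type estimates on the oscillatory integrals in each $a_i$, which in turn needs a careful choice of the $\xi_i$ as a minimal primitive triple.

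Given the stage proposition, I would close the iteration by selecting $\lambda_{q+1}=\lambda_q^b$ and $\ell_q=\lambda_q^{-c}$ for exponents $b>1$ and $c\in(0,1)$ chosen to balance the mollification error (which is $O(\ell_q^\beta)$, forcing $\alpha<\beta/2$) against the convergence of $\sum_q \|v_{q+1}-v_q\|_1\,\lambda_{q+1}^\alpha$ in the interpolation $\mathcal{C}^{1,\alpha}\subset (\mathcal{C}^1,\mathcal{C}^2)_\alpha$. Standard Nash--Kuiper bookkeeping yields the condition $\alpha\,b(1+4/k) < 1$, and letting $b\to 1^+$ recovers precisely the Hölder threshold $\alpha<\frac{1}{1+4/k}$ stated in the theorem. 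Passing to the limit $q\to\infty$ in a $\mathcal{C}^{1,\alpha}$-closed ball then gives $(\tilde v,\tilde w)$ as claimed.
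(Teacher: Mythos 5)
Your outer architecture — mollify, run a Nash--Kuiper stage that oscillates with amplitudes $a_i$ and orthogonal codimension directions $\eta_i$, interpolate $\mathcal{C}^{1,\alpha}$ between $\mathcal{C}^1$ and $\mathcal{C}^2$, then tune the growth of $\lambda_q$ — matches the paper's scheme, and your observation that mutual orthogonality of the $\eta_i\in\R^k$ kills cross-terms and lets $k$ primitives be cancelled at once is exactly what the paper exploits.

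But the heart of the improvement, the mechanism that reduces the effective number of primitives per stage, is missing. The paper does \emph{not} obtain $\frac{4}{k}$ by a ``two-layer substage'' in which the self-product $\sin^2=\frac12(1-\cos 2\cdot)$ of a first oscillation layer produces a rank-one contribution absorbed by a second layer. It uses Lemma~\ref{lem_diagonal}: for any symmetric $D$ on a planar domain one can solve two scalar Poisson problems to get $\bar\Psi(D)$ and $\bar a(D)$ with $D+\sym\nabla\bar\Psi(D)=\bar a(D)\,\Id_2$, with Schauder control on all H\"older norms. Since the symmetric-gradient part $\sym\nabla\bar\Psi(D)$ can be absorbed into $\tilde w$ at no oscillatory cost (see~(\ref{vw_fin})), the deficit is replaced by a scalar multiple of $\Id_2=e_1\otimes e_1+e_2\otimes e_2$, which has exactly $2$ rank-one primitives instead of the generic $d_*=3$ for $d=2$; after the $\times 2$ in the H\"older-exponent formula this is the $6/k\to 4/k$ improvement. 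Your proposed substage mechanism doesn't do this: the $\sin^2$ identity is already used \emph{within} a single step of Lemma~\ref{lem_step} to cancel the main quadratic term, and there is no reason the remaining low-frequency error aligns with a chosen rank-one direction so as to be absorbed by a ``second layer.'' Nor does the generic symmetric $2\times 2$ deficit lie in the span of $2$ (or $4$) rank-one matrices of fixed directions; you need the elliptic decomposition to remove the off-diagonal component. Relatedly, you count ``$4$ primitives rather than $6$,'' but the baseline number of primitives in $d=2$ is $3$, not $6$; the $6$ in $\frac{1}{1+6/k}$ is already the doubled quantity $2d_*$, and the paper goes from $3$ to $2$ primitives. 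Without a concrete replacement for Lemma~\ref{lem_diagonal}, the stage estimate you posit — $\|\tilde D\|_0$ reduced by a definite fraction with $\|\tilde v\|_2\lesssim\lambda\|D\|_0^{1/2}$ and only $4$ effective primitives — is an assertion, not a construction.

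Two further points you should at least flag. First, using the Schauder estimates of Lemma~\ref{lem_diagonal} forces a positive H\"older index $\gamma$ at every even step, and this seeps into the stage estimates as extra factors $\lambda^{\gamma}$ (see \ref{Abound1}--\ref{Abound3}); the Nash--Kuiper bookkeeping then has to interpolate this away, which is why the paper's parameter choice is the double-exponential ansatz of \cite{15} with $\lambda_i=b/l_i^a$, $l_i = B^{(q^i-1)/(q-1)}l_0^{q^i}$ analyzed in two regimes $\frac{\beta}{2}\gtrless\frac{S}{S+2J}$, rather than the one-line condition $\alpha b(1+4/k)<1$. Second, the stage counts steps as $N=\mathrm{lcm}(2,k)=2S=kJ$, with the frequency raised by $(\lambda l)$ once per $k$ steps and the deficit lowered by $(\lambda l)^{-1}$ once per $2$ steps, yielding $\frac{S}{S+2J}=\frac{1}{1+4/k}$; your sketch doesn't track how the $2$-periodicity of the spatial directions interleaves with the $k$-periodicity of the codimension directions, which is exactly what the progression (\ref{count_lam}) and the step/stage bookkeeping in Section~\ref{sec_stage} organize.
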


\noindent The above result implies, as in \cite{lew_conv}:

\begin{corollary}\label{th_CI_weakMA} 
For any $f\in L^{\infty} (\omega, \R)$ on an open, bounded, simply connected
domain $\omega\subset\mathbb{R}^2$, the following holds.
Fix $k\geq 1$ and fix an exponent:
$$0< \alpha< \frac{1}{1+\frac{4}{k}}.$$ 
Then the set of $\mathcal{\mathcal{C}}^{1,\alpha}(\bar\omega, \R^k)$
weak solutions to (\ref{MA}) is dense in $\mathcal{\mathcal{C}}^0(\bar\omega, \R^k)$. 
Namely, every $v\in \mathcal{\mathcal{C}}^0(\bar\omega,\R^k)$ is the
uniform limit of some sequence $\{v_n\in\mathcal{\mathcal{C}}^{1,\alpha}(\bar\omega,\R^k)\}_{n=1}^\infty$,
such that:
\begin{equation*} 
\mathfrak{Det}\, \nabla^2 v_n  = f \quad \mbox{ on } \; \omega\;
\mbox{ for all }\; n=1\ldots\infty.
\end{equation*}
\end{corollary}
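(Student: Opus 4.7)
The plan is to reduce Corollary \ref{th_CI_weakMA} to Theorem \ref{th_final} by producing a potential $A$ such that $-\curl\curl\,A=f$, and exploiting the equivalence of (\ref{MA}) and (\ref{VK}) on the simply connected $\omega$ recalled in the introduction. Given $v\in\mathcal{C}^0(\bar\omega,\R^k)$, $0<\alpha<\frac{1}{1+4/k}$ and $\epsilon>0$, I would fix any $\beta\in(2\alpha,2)$, so that $\alpha<\min\{\beta/2,\frac{1}{1+4/k}\}$, and then carry out the three steps below.

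First I would construct $A$ by extending $f$ by zero to an $L^\infty$ function on a ball $B\Supset\bar\omega$, solving $\Delta\phi=-f$ in $B$, and setting $A:=\phi\,\Id_2$. Interior $W^{2,p}$-theory for the Laplacian with $p$ arbitrarily large gives $\phi\in\mathcal{C}^{1,\gamma}(\bar\omega)$ for every $\gamma<1$, which places $A$ in $\mathcal{C}^{0,\beta}(\bar\omega,\R^{2\times 2}_\sym)$ for every $\beta<2$; a direct computation $\curl\curl(\phi\,\Id_2)=\Delta\phi$ then yields $-\curl\curl\,A=f$ on $\omega$.

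Second, I would mollify $v$ to some $v^*\in\mathcal{C}^1(\bar\omega,\R^k)$ with $\|v^*-v\|_0\le\epsilon/2$, and choose $w^*(x):=-tx$ for $t>0$ large enough that, using boundedness of $A$ and $(\nabla v^*)^T\nabla v^*$ on $\bar\omega$,
\begin{equation*}
A-\Big(\frac{1}{2}(\nabla v^*)^T\nabla v^*+\sym\nabla w^*\Big)=A-\frac{1}{2}(\nabla v^*)^T\nabla v^*+t\,\Id_2>c\,\Id_2
\end{equation*}
for some $c>0$, giving the strict subsolution gap (\ref{baqara}). Theorem \ref{th_final} applied to $(v^*,w^*,A)$ with tolerance $\epsilon/2$ produces $\tilde v\in\mathcal{C}^{1,\alpha}(\bar\omega,\R^k)$ and $\tilde w\in\mathcal{C}^{1,\alpha}(\bar\omega,\R^2)$ satisfying \ref{FFbound1}--\ref{FFbound2}.

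Third and finally, the triangle inequality gives $\|\tilde v-v\|_0\le\epsilon$; and since the Monge-Amp\`ere operator is defined in the $\mathcal{C}^{1,\alpha}$ class through the distributional identity $\mathfrak{Det}\,\nabla^2\tilde v=-\curl\curl\bigl(\tfrac{1}{2}(\nabla\tilde v)^T\nabla\tilde v\bigr)$ (which is precisely the equivalence used above), applying $-\curl\curl$ to \ref{FFbound2} yields $\mathfrak{Det}\,\nabla^2\tilde v=-\curl\curl\,A=f$. Iterating with $\epsilon_n\to 0$ gives the claimed density. There is no substantial new obstacle: Theorem \ref{th_final} already contains the convex integration argument, and the only point requiring care is that $L^\infty$-regularity of $f$ suffices to place $A$ in $\mathcal{C}^{0,\beta}$ for $\beta$ arbitrarily close to $2$, so that the full range $\alpha<\frac{1}{1+4/k}$ is accessible.
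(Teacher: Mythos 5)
Your proposal is correct and follows the natural reduction that the paper itself points to (it defers the proof of this corollary to \cite{lew_conv}, but the mechanism is exactly this): build a potential $A$ with $-\curl\,\curl\,A=f$, choose a subsolution $(v^*,w^*)$ satisfying (\ref{baqara}) by subtracting a large multiple of $\Id_2$ via $w^*$, invoke Theorem \ref{th_final}, and recover $\mathfrak{Det}\,\nabla^2\tilde v=f$ distributionally by applying $-\curl\,\curl$ to \ref{FFbound2}, using that $\curl\,\curl\,\sym\nabla\tilde w\equiv 0$. The key observation you rightly emphasize—that $L^\infty$-regularity of $f$ gives $\phi\in W^{2,p}_{\mathrm{loc}}$ for all $p$, hence $A=\phi\,\Id_2\in\mathcal{C}^{0,\beta}$ for $\beta$ arbitrarily close to $2$, so no regularity on $f$ is lost in the exponent range—is exactly what makes the full range $\alpha<\frac{1}{1+4/k}$ accessible.
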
 

\bigskip

\noindent For the isometric immersion problem (\ref{II}) with
$d\geq 1$, $k=1$, corresponding to codimension $1$ im\-mer\-sions
$u:\omega\to\R^{d+1}$, a version of Theorem 
\ref{th_final} has been shown in \cite[Theorem 1.1]{CDS} with
flexibility up to $\mathcal{C}^{1,\frac{1}{1+d(d+1)}}$. The case $d=2$
is special and flexibility (still in co\-di\-men\-sion $1$) of (\ref{MA})
was proved to hold up to $\mathcal{C}^{1,\frac{1}{5}}$ in \cite{DIS1/5}
using the conformal equivalence of two-\-di\-men\-sional metrics to the
Euclidean metric, which is the fact whose linear counterpart we
utilize in the present work as well.
On the other hand, a result in \cite[Theorem 1.1]{Kallen} yields
flexibility for (\ref{II}) up to $\mathcal{C}^{1,\alpha}$ for 
$\alpha$ arbitrarily close to $1$ as $k\to\infty$, in agreement 
with our Theorem \ref{th_final} and \cite[Theorem 1.1]{lew_conv}.
We point out that the dependence of
regularity on $k$ has not been quantified in \cite{Kallen}, and that
having $k\geq d(d+1)$ was essential even for the local version of that result.

\medskip

\subsection{Overview of the paper: sections \ref{sec_step} and \ref{sec_stage}} 
We state all the intermediary results for general dimensions and
codimensions $d,k\geq 1$, and specify to $d=2$ only when necessary.

\bigskip

\noindent In section \ref{sec_step} we gather preliminary estimates and building
blocks for the proof of Theorem \ref{th_final}. First, we
recall the single ``step'' construction of the convex integration
algorithm from \cite{lew_conv}. Since now it is essential to achieve
cancellations of the one-dimensional primitive deficits with
least error possible, the previous definition of perturbation fields from
\cite{lewpak_MA} would not work for this purpose.
Second, we recall the convolution and commutator estimates
from \cite{CDS}. Third, we present a matrix decomposition result in Lemma \ref{lem_diagonal}, specific to the
present dimension $d=2$. This is essentially a reformulation of a
result in \cite{CS}, which allows us to make a conjecture for
$d\geq 3$ and the flexibility exponent that could be achieved this way.
Fourth, we recall the first step in the Nash-Kuiper iteration from
\cite{lew_conv} which decreases the positive-definite deficit arbitrarily, in
particular permitting the application of Theorem \ref{thm_stage} below.

\bigskip

\noindent In section \ref{sec_stage} we carry out the ``stage''
construction, that is the first main contribution of this paper and a
technical ingredient towards the flexibility range
stated in Theorem \ref{th_final}. Namely:

\begin{theorem}\label{thm_stage}
Given an open, bounded, smooth domain $\omega\subset \R^2$,
there exists $l_0\in (0,1)$ such that the following holds for every $l\in (0,l_0)$.
Fix $v\in\mathcal{C}^2(\bar\omega + \bar B_{2l}(0),\R^k)$,
$w\in\mathcal{C}^2(\bar\omega + \bar B_{2l}(0),\R^2)$ and
$A\in\mathcal{C}^{0,\beta}(\bar\omega + \bar B_{2l}(0),\R^{2\times
  2}_\sym)$ defined on the closed $2l$-neighbourhood of $\omega$. Further, fix
an exponent $\gamma$ and constants $\lambda, M$ satisfying:
$$\gamma\in (0,1), \quad \lambda>\frac{1}{l},\quad 
M\geq\max\{\| v\|_2 , \|w\|_2, 1\}.$$
Then, there exist $\tilde v\in\mathcal{C}^2(\bar \omega+\bar B_l(0),\R^k)$,
$\tilde w\in\mathcal{C}^2(\bar\omega+\bar B_l,\R^2)$ defined on the
closed $l$-neighbourhood of $\omega$, such that, denoting the defects:
\begin{equation*}
\begin{split}
& \mathcal{D}=A -\big(\frac{1}{2}(\nabla v)^T\nabla v + \sym\nabla
w\big), \quad \tilde{\mathcal{D}}=A -\big(\frac{1}{2}(\nabla \tilde
v)^T\nabla \tilde v + \sym\nabla \tilde w\big),
\end{split}
\end{equation*}
the following bounds hold: 
\begin{align*}
& \hspace{-3mm} \left. \begin{array}{l} \|\tilde v - v\|_1\leq
C\lambda^{\gamma/2}\big(\|\mathcal{D}\|_0^{1/2}+lM\big), \vspace{1mm} \\ 
\|\tilde w - w\|_1\leq C\lambda^{\gamma}\big(\|\mathcal{D}\|_0^{1/2}+lM\big)
\big(1+ \|\mathcal{D}\|_0^{1/2}+lM+\|\nabla v\|_0\big), \end{array}\right.
\tag*{(\theequation)$_1$}\refstepcounter{equation} \label{Abound1}\vspace{5mm}\\
& \hspace{-3mm} \left. \begin{array}{l} \|\nabla^2\tilde v\|_0\leq C{\displaystyle{\frac{(\lambda
 l)^J}{l}\lambda^{\gamma/2}}}\big(\|\mathcal{D}\|_0^{1/2}+lM\big),\vspace{1mm}\\ 
\|\nabla^2\tilde w\|_0\leq C{\displaystyle{\frac{(\lambda
  l)^J}{l}}}\lambda^{\gamma}\big(\|\mathcal{D}\|_0^{1/2}+lM\big)
\big(1+\|\mathcal{D}\|_0^{1/2}+lM+\|\nabla v\|_0\big), \end{array}\right.
\tag*{(\theequation)$_2$}\label{Abound2} \medskip\\ 
& \|\tilde{\mathcal{D}}\|_0\leq C\Big(l^\beta{\|A\|_{0,\beta}}
+\frac{1}{(\lambda l)^S} \lambda^{\gamma}\big(
\|\mathcal{D}\|_0 + (lM)^2\big)\Big). \tag*{(\theequation)$_3$} \label{Abound3}
\end{align*}
Above, the norms related to functions $v,w, A, \mathcal{D}$ and $\tilde v,
\tilde w, \tilde{\mathcal{D}}$ are taken on the respective domains of their definiteness.
The constants $C$ depend only on $\omega, k, \gamma$.
The exponents $S,J$ are given through the least common multiple
of the dimension $2$ and the codimension $k$ in: 
\begin{equation}\label{lcm_defe}
lcm(2,k)=2S=kJ.
\end{equation}
\end{theorem}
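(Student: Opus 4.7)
The strategy is to mollify the data at scale $l$, decompose the smoothed deficit using Lemma \ref{lem_diagonal}, and iterate the single-step construction recalled in Section \ref{sec_step} exactly $N := lcm(2,k) = 2S = kJ$ times. The integer $N$ is chosen precisely so that, after breaking the deficit into the $2$ symmetric rank-one summands per round supplied by Lemma \ref{lem_diagonal} in the $d=2$ setting, and cycling through the $k$ codimensional coordinate directions $e_j\in\R^k$, each direction $e_j$ is used exactly $J$ times for a total of $2S$ single-step perturbations. The improvement over \cite{lew_conv} arises because Lemma \ref{lem_diagonal} supplies only $2$ rank-one terms per round instead of the $d(d+1)/2=3$ needed for a generic symmetric defect in $d=2$, which is what converts $6/k$ to $4/k$ in the final flexibility exponent.

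The concrete steps are as follows. First, I would mollify by a standard kernel to obtain smooth $\bar v, \bar w, \bar A$ on the $3l/2$-neighbourhood of $\omega$, and use the commutator estimates from \cite{CDS} recalled in Section \ref{sec_step} to establish
\[
\|\bar{\mathcal{D}} - \mathcal{D}\|_0 \leq C\bigl(l^\beta \|A\|_{0,\beta} + (lM)^2\bigr),
\]
which already accounts for the $l^\beta\|A\|_{0,\beta}$ term in \ref{Abound3}. Second, I would apply Lemma \ref{lem_diagonal} pointwise to write $\bar{\mathcal{D}}(x) + \rho^2\Id_2$ (with a small positive shift $\rho^2$ absorbing possible indefiniteness, to be hidden in the constants) as a sum of $2S$ symmetric rank-one matrices $a_n(x)^2\,\xi_n(x)\otimes\xi_n(x)$ with amplitudes controlled by $\|\mathcal D\|_0^{1/2}+lM$ and directions $\xi_n$ inheriting the smoothness of $\bar{\mathcal D}$. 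Third, fixing geometric frequencies $\lambda_n = l^{-1}(\lambda l)^{n/N}$ for $n=1,\ldots,N$ and cyclic coordinate assignments $j(n)\in\{1,\ldots,k\}$, I would iterate the step of Section \ref{sec_step} starting from $(v_0,w_0):=(\bar v,\bar w)$: at step $n$, add a high-frequency corrugation in the direction $e_{j(n)}$ at frequency $\lambda_n$ that cancels, to leading order, the $n$-th rank-one summand of the decomposition.

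Summing the step estimates telescopically recovers the three bounds of the theorem: \ref{Abound1} from triangle-inequality summation of the single-step $C^1$ contributions, \ref{Abound2} from the iterated second-derivative estimates (the factor $(\lambda l)^J/l$ reflecting the $J$-fold use of each coordinate direction $e_j$ at geometrically separated scales), and \ref{Abound3} from the geometric decrease of the residual defect at each step, with the factor $(\lambda l)^{-S}$ emerging once the paired cancellations produced by Lemma \ref{lem_diagonal} are accounted for. The $\lambda^{\gamma/2}$ and $\lambda^{\gamma}$ prefactors absorb the logarithmic losses coming from the geometric spacing. The main technical obstacle, and the place where the improvement genuinely lives, is verifying that the quadratic interactions inside $\frac{1}{2}(\nabla v)^T\nabla v$ between perturbations supported on a common coordinate $e_j$ do not spoil the per-step cancellation; controlling these interactions forces both the cyclic direction-assignment rule (so that consecutive steps act on distinct $e_j$'s) and the geometric frequency progression (so that same-$e_j$ perturbations are sufficiently separated in scale to be absorbed into the next residual), and the required balance is precisely what couples $S$ and $J$ through the identity $lcm(2,k)=2S=kJ$.
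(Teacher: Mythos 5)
You correctly identify the key ingredients (mollification, Lemma~\ref{lem_diagonal}, $N = \mathrm{lcm}(2,k)$ single steps, cycling through the $k$ codimension directions, an increasing frequency schedule, control of quadratic self-interactions), and you rightly locate where the improvement from $6/k$ to $4/k$ lives. However, the central mechanism of the proof is misrepresented. Lemma~\ref{lem_diagonal} applied once does \emph{not} decompose a symmetric deficit into $2S$ rank-one pieces; it writes $D + \sym\nabla\bar\Psi(D) = \bar a(D)\Id_2$, i.e.\ two rank-one pieces $a^2 e_1\otimes e_1 + a^2 e_2\otimes e_2$ in the \emph{fixed} coordinate directions, sharing a single scalar amplitude $a$, plus a symmetric gradient $\sym\nabla\bar\Psi$ to be absorbed into $\tilde w$. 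The $S$ rounds in the paper arise from re-applying the Lemma to the \emph{residual} deficit $\mathcal{D}_s$ left over after each pair of steps, and the crucial point is that $\|\mathcal{D}_s\|_0$ decreases by a factor $(\lambda l)^{-1}$ per round, so the amplitudes $a_s$ shrink like $(\lambda l)^{-s/2}$. This shrinkage is what cancels the $(\lambda l)^{s/2}$ part of the frequency schedule $\lambda_i l = (\lambda l)^{1+j(i)+s(i)/2}$ (piecewise constant on the joint $k$-block and $2$-block partition, not geometric as you propose) and yields exactly $(\lambda l)^J/l$ in \ref{Abound2} and $(\lambda l)^{-S}$ in \ref{Abound3}. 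A single upfront decomposition of $\bar{\mathcal D}$ into $2S$ rank-one terms of common size $\|\mathcal D\|_0^{1/2}+lM$ leaves nothing to compensate the frequency growth, and the exponent balance in \ref{Abound2}--\ref{Abound3} would fail to close.

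Two further, smaller gaps: the cancelled primitive deficits always point along $e_1,e_2$ (that is precisely what Lemma~\ref{lem_diagonal} buys over the varying-direction decomposition of \cite{CDS}), not along $x$-dependent $\xi_n(x)$ "inheriting the smoothness of $\bar{\mathcal D}$"; and the $\lambda^{\gamma/2}$, $\lambda^\gamma$ prefactors are not logarithmic losses from spacing, but arise because Lemma~\ref{lem_diagonal} rests on Schauder estimates and hence controls only $\mathcal{C}^{m,\gamma}$ norms of $\Psi_s$ and $a_s$, forcing an interpolation loss that accumulates as $(\lambda_0\lambda_2\cdots\lambda_{2s})^\gamma$ across the rounds. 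You would also need to track the accumulated correction $\sum_{s=0}^{S-1}\Psi_s$ in $\tilde w$, whose $\mathcal{C}^1$ and $\mathcal{C}^2$ norms feed into \ref{Abound1} and \ref{Abound2}.
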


\noindent We outline how Theorem \ref{thm_stage} differs from
\cite{lewpak_MA, lew_conv}. In \cite{lewpak_MA} as in
\cite{CDS}, a ``stage'' consisted of:
$$d_*=\frac{d(d+1)}{2}=\mbox{dim}\,\R^{d\times d}_\sym$$ 
number of ``steps'', each cancelling one of the rank-one ``primitive'' deficits in the
decomposition of $\mathcal{D}$. The initially chosen frequency of
the corresponding one-dimensional perturbations was multiplied by a factor $\lambda l$ at each step, leading
to the increase of the second derivative by $(\lambda l)^{d_*}$, while the remaining error in
$\mathcal{D}$ was of order $\frac{1}{\lambda l}$. Presently, in agreement with
\cite{DIS1/5} and \cite{CS}, we first observe that by Lemma
\ref{lem_diagonal} any positive definite deficit may be replaced by a
positive multiple of $\Id_2$ modulo a symmetric gradient of a in-plane field with
controlled H\"older norms. This reduces the number of primitive
deficits from $2_*=3$ to $2$. Second, it is possible to cancel $k$ such
deficits at once, by using $k$ linearly independent
codimension directions. Since there are $2$ primitive deficits, then after
cancelling these, one may proceed to cancelling the second order deficits
obtained as in the rank-one decomposition of the error between the
original and the decreased $\mathcal{D}$; the corresponding
frequencies must be then increased by the factor $(\lambda l)^{1/2}$,
precisely due to the decrease of $\mathcal{D}$ by 
$\frac{1}{\lambda l}$, as before. We inductively proceed in this
fashion (see Figure \ref{fig1}), cancelling even higher order deficits, and adding $k$-tuples of single
codimension perturbations, for a total of
$N=lcm(2,k)$ steps. The frequencies increase by the factor of
$\lambda l$ over each multiple
of $k$, leading to the total increase of second derivatives by $(\lambda
l)^J$ in \ref{Abound2},  and by the factor of $(\lambda l)^{1/2}$ over
each multiple of $2$ (i.e. at even steps),
implying the total decrease of the deficit by the factor of $\frac{1}{(\lambda l)^S}$
in \ref{Abound3}. Third, each application of Lemma
\ref{lem_diagonal} at even steps, yields bounds in terms of the H\"older norms
with a necessarily positive $\gamma$ (due to Schauder's estimates in
that proof); hence we need to interpolate between the
previously controlled norms and higher order norms, leading to the new factor
$\lambda^\gamma$ in all estimates \ref{Abound1}-\ref{Abound3}.

\medskip

\subsection{Overview of the paper: sections \ref{sec4} to \ref{sec_appli}} 

Even though Theorem \ref{thm_stage} was specific to dimension $d=2$
due to Lemma \ref{lem_diagonal}, the Nash-Kuiper scheme involving induction
on stages may be stated more generally. Section \ref{sec4} presents the proof of:

\begin{theorem}\label{th_NashKuiHol}
Let $\omega\subset\R^d$ be an open, bounded, smooth domain 
and let $k\geq 1$, $l_0\in (0,1)$ be such that the statement of Theorem \ref{thm_stage}
holds true with some given exponents $S,J\geq 1$ (not necessarily satisfying condition
(\ref{lcm_defe})). Then we have the following.  
For every $v\in\mathcal{C}^2(\bar\omega + \bar B_{2l_0}(0),\R^k)$,
$w\in\mathcal{C}^2(\bar\omega +\bar B_{2l_0}(0),\R^d)$
and $A\in\mathcal{C}^{0,\beta}(\bar\omega +\bar B_{2l_0}(0), \R^{d\times d}_\sym)$, such that:
$$\mathcal{D}=A-\big(\frac{1}{2}(\nabla v)^T\nabla v + \sym\nabla
w\big) \quad\mbox{ satisfies } \quad 0<\|\mathcal{D}\|_0\leq 1,$$
and for every $\alpha$ in the range:
\begin{equation}\label{rangeAl}
0< \alpha <\min\Big\{\frac{\beta}{2},\frac{S}{S+2J}\Big\},
\end{equation}
there exist $\tilde v\in\mathcal{C}^{1,\alpha}(\bar\omega,\R^k)$ and
$\tilde w\in\mathcal{C}^{1,\alpha}(\bar\omega,\R^d)$ with the following properties:
\begin{align*}
& \|\tilde v - v\|_1\leq C \big(1+\|\nabla v\|_0\big)^2
\|\mathcal{D}_0\|_0^{1/4}, \quad \|\tilde w -
w\|_1\leq C(1+\|\nabla v\|_0)^3\|\mathcal{D}\|_0^{1/4},
\tag*{(\theequation)$_1$}\refstepcounter{equation} \label{Hbound1}\vspace{1mm}\\
& A-\big(\frac{1}{2}(\nabla \tilde v)^T\nabla \tilde v + \sym\nabla
\tilde w\big) =0 \quad\mbox{ in }\; \bar\omega. \tag*{(\theequation)$_2$}\label{Hbound2} 
\end{align*}
The norms in
the left hand side of \ref{Hbound1} are taken on $\bar\omega$, and in the right hand
side on $\bar\omega+ \bar B_{2l_0}(0)$. The constants $C$ depend only
on $\omega, k, A$ and $\alpha$. 
\end{theorem}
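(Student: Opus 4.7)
The plan is to construct, by iteratively applying Theorem \ref{thm_stage}, a sequence of smooth approximations $\{(v_n, w_n)\}_{n \geq 0}$ with $(v_0, w_0) = (v, w)$, defined on nested neighborhoods $\bar\omega + \bar B_{2l_n}(0)$, and then extract the limit in $\mathcal{C}^{1,\alpha}$. Each stage consumes input on $\bar\omega + \bar B_{2l_n}(0)$ and returns output on $\bar\omega + \bar B_{l_n}(0) \supseteq \bar\omega + \bar B_{2l_{n+1}}(0)$, so I would adopt a geometric schedule $l_n = l_* \sigma^{-n}$ with $\sigma \geq 2$ and $l_* < l_0$, coupled frequencies $\lambda_n l_n = \mu$ for a constant $\mu > 1$, and a small H\"older parameter $\gamma > 0$ common to all stages. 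The exponent $\gamma$ is an unavoidable artifact of the Schauder step in the proof of Theorem \ref{thm_stage} (via Lemma \ref{lem_diagonal}) and will be fixed at the end as a function of the target $\alpha$.

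Guided by the clean $\gamma = 0$ case, the inductive hypothesis I would propagate is
\[
\delta_n := \|\mathcal{D}_n\|_0 \leq \delta_0\, \tau^{-n}, \qquad l_n M_n \lesssim \delta_n^{1/2}, \qquad M_n := \max\bigl(\|v_n\|_2, \|w_n\|_2, 1\bigr),
\]
for a geometric ratio $\tau > 1$ chosen below. Under this matched regime, the bounds \ref{Abound1}-\ref{Abound3} of Theorem \ref{thm_stage} reduce essentially to
\[
\|v_{n+1} - v_n\|_1 \lesssim \mu^{\gamma/2} l_n^{-\gamma/2}\, \delta_n^{1/2}, \quad M_{n+1} \lesssim \mu^{J+\gamma/2} l_n^{-1-\gamma/2}\, \delta_n^{1/2}, \quad \delta_{n+1} \lesssim l_n^\beta + \mu^{\gamma-S} l_n^{-\gamma}\, \delta_n,
\]
and propagating the inductive bounds requires three compatibility conditions: (i) $\tau \leq \sigma^\beta$, so that $l_n^\beta \leq \tfrac{1}{2}\delta_{n+1}$; (ii) $\mu$ large enough that the second term of the defect recursion is absorbed into $\tfrac{1}{2}\delta_{n+1}$ despite the $\sigma^{n\gamma}$ bleed from $l_n^{-\gamma}$; and (iii) the matching $\sigma = \mu^J\sqrt{\tau}$, up to $\sigma^{O(\gamma)}$ corrections, so that the new $M_{n+1}$ keeps the balance $l_{n+1}M_{n+1} \lesssim \delta_{n+1}^{1/2}$. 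All three can be met by choosing $\mu$ large first, then $\tau$ at its binding ceiling, and finally $\gamma$ small enough to absorb the $\sigma^{O(\gamma)}$ drift.

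For $\mathcal{C}^{1,\alpha}$-convergence I would apply Gagliardo-Nirenberg interpolation to the telescoping differences,
\[
\|v_{n+1} - v_n\|_{1,\alpha} \leq C\, \|v_{n+1} - v_n\|_1^{1-\alpha}\, \bigl(M_n + M_{n+1}\bigr)^\alpha \lesssim \bigl(\sigma^\alpha \tau^{-1/2}\bigr)^n \cdot \sigma^{O(\gamma)\, n},
\]
and similarly for $w_n$. Summability demands $\sigma^{2\alpha} < \tau$; substituting $\sigma = \mu^J\sqrt{\tau}$ and maximizing $\tau$ against each of the two ceilings $\tau \leq \sigma^\beta$ and $\tau \leq \mu^S$ translates this into the two open conditions $\alpha < \beta/2$ and $\alpha < S/(S+2J)$ appearing in (\ref{rangeAl}). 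Once $\alpha$ is strictly below both, the slack is used to choose $\gamma$ small. The Cauchy property in $\mathcal{C}^{1,\alpha}$ delivers the limits $\tilde v, \tilde w \in \mathcal{C}^{1,\alpha}(\bar\omega)$; since $\delta_n \to 0$ and the map $(v, w) \mapsto \tfrac{1}{2}(\nabla v)^T\nabla v + \sym\nabla w$ is continuous from $\mathcal{C}^1$ into $\mathcal{C}^0$, the limits satisfy \ref{Hbound2}, and telescoping $\|v_{n+1}-v_n\|_1$ with the initial stage contribution carrying the $\|\nabla v\|_0$ dependence produces \ref{Hbound1}.

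The main obstacle is precisely the interplay between the unavoidable $\gamma > 0$ and the geometric rates: $\gamma$ injects multiplicative $\sigma^{n\gamma}$ factors into both the defect recursion and the $\mathcal{C}^{1,\alpha}$ summability criterion, which cannot be absorbed if one tries to saturate $\alpha = \beta/2$ or $\alpha = S/(S+2J)$. The remedy is the specific ordering $\alpha \to (\sigma, \tau, \mu) \to \gamma$ described above, exploiting strict slack — and this is what forces the range (\ref{rangeAl}) to be open, with the sharp exponent $S/(S+2J)$ recovered only in the formal limit $\gamma \to 0$.
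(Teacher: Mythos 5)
Your scheme transcribes the stage estimates correctly and, in the formal $\gamma=0$ limit, your exponent arithmetic ($\sigma=\mu^{J}\sqrt{\tau}$, $\tau\leq\min\{\sigma^{\beta},\mu^{S}\}$, summability iff $\sigma^{2\alpha}<\tau$) does recover the range $\alpha<\min\{\beta/2,\,S/(S+2J)\}$. But there is a genuine gap at the heart of the iteration: with the purely geometric schedule $l_n=l_*\sigma^{-n}$ and the \emph{constant} coupling $\lambda_n l_n=\mu$, the Schauder loss in \ref{Abound3} is a power of the full frequency, $\lambda_n^{\gamma}=(\mu/l_*)^{\gamma}\sigma^{n\gamma}$, so your defect recursion reads
\begin{equation*}
\delta_{n+1}\leq C\,l_n^{\beta}+C\,\mu^{\gamma-S}l_*^{-\gamma}\,\sigma^{n\gamma}\,\delta_n .
\end{equation*}
The contraction factor multiplying $\delta_n$ \emph{degrades} with $n$: no matter how large $\mu$ and how small the fixed $\gamma>0$, it exceeds $1$ once $n\gamma\log\sigma$ beats $(S-\gamma)\log\mu$, and the cumulative factor after $n$ stages is $\sigma^{\gamma n(n-1)/2}$, which defeats any geometric ansatz $\delta_n\leq\delta_0\tau^{-n}$ (and even $\delta_n\to0$) obtainable from these bounds. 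The same $n$-dependent drift, via the $\lambda^{\gamma/2}$ in \ref{Abound2}, destroys the balance $l_{n+1}M_{n+1}\lesssim\delta_{n+1}^{1/2}$. So the claim that the ``$\sigma^{n\gamma}$ bleed'' can be absorbed ``by choosing $\mu$ large first, \dots, and finally $\gamma$ small'' is false for fixed parameters: the loss is not an $\mathcal{O}(\gamma)$ correction to the exponents, it compounds quadratically in $n$ inside the recursion, where (unlike in the $\mathcal{C}^{1,\alpha}$ summability step) there is no slack to spend.

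This is precisely the obstruction that forces the paper to abandon the geometric schedule: there one takes $\lambda_i=b/l_i^{a}$ with $a>1$ and the double-exponential lengths $l_i=B^{\frac{q^i-1}{q-1}}l_0^{q^i}$, so that $\lambda_i l_i=b\,l_i^{-(a-1)}\to\infty$ and the per-stage error factor becomes $\lambda_i^{\gamma}/(\lambda_i l_i)^{S}=b^{\gamma-S}\,l_i^{(a-1)S-a\gamma}$, a \emph{positive} power of $l_i$ once $\gamma\ll a-1$; the Schauder loss is then genuinely absorbed, uniformly in $i$, and the choice of $a-1$ and $\gamma$ small relative to the slack $\alpha<\min\{\beta/2,S/(S+2J)\}$ closes both the defect recursion and the $\mathcal{C}^{1,\alpha}$ summability (this is the ``double exponential'' ansatz borrowed from the Euler-equation iteration of \cite{15}, handled in the paper in the two cases $\beta/2>S/(S+2J)$ and $\beta/2\leq S/(S+2J)$ with different choices of $M_i$). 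To repair your proof you would have to let $\lambda_n l_n\to\infty$ at a rate tied to $l_n^{-(a-1)}$ and give up the single-exponential $l_n$, i.e.\ essentially reproduce that scheme; as written, the induction in your step (ii) cannot be closed for all $n$.
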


\noindent The first bound in \ref{Hbound1} is actually valid with any
power smaller than $\frac{1}{2}$ in $\|\mathcal{D}_0\|_0$, and any
power larger than $1$ or $2$ in $1+\|\nabla v_0\|_0$, in $\|\tilde v - v\|_1$
or $\|\tilde w - w\|_1$, respectively. This is consistent with
\cite[Theorem 4.1]{lew_conv}, however the presented bounds are enough for our purpose.

\bigskip

\noindent The proof of Theorem \ref{th_NashKuiHol} is quite
technical. It involves iterating Theorem \ref{thm_stage}, where the
key challenge is to choose 
the right progression of parameters $l\to 0$, $\lambda\to\infty$ and
$M\to\infty$, not only consistent with the inductive procedure assumptions
and yielding $\|\mathcal{D}\|_0\to 0$, but also to guarantee that the rate of blow-up of $\|v\|_2,
\|w\|_2$ can be compensated by the control on $\|v\|_1, \|w\|_1$,
thus admitting the control of $\|v\|_{1,\alpha}, \|w\|_{1,\alpha}$
through the interpolation inequality. We show how these choices follow
naturally, separately in the two cases of
$\frac{\beta}{2}>\frac{S}{S+2J}$ and  $\frac{\beta}{2}\leq\frac{S}{S+2J}$
and that they may be achieved with sufficiently small positive $\lambda$. As in
the iteration scheme for (\ref{II}) in \cite{DIS1/5} and for (\ref{VK}) in
\cite{CS}, both valid for $d=2, k=1$, we use the ``double
exponential'' ansatz; a technical idea borrowed from the iteration
scheme in \cite{15} where the double exponential decay was used to produce
H\"older solutions to the Euler equations. The fact that we separate estimates
Theorem \ref{th_NashKuiHol} from Theorem \ref{thm_stage} provides a
cleaner ``modular'' proof, ready to tackle the dimension $d>2$, should
a version of Conjecture \ref{conje} become available.
In section \ref{sec5}, we finally prove Theorem \ref{th_final}, which
at this point becomes quite straightforward. 

\bigskip

\noindent In the last section \ref{sec_appli} we present an application of Theorem
\ref{th_final} for deriving the scaling laws bounds of the so-called
prestrained elastic energies of thin films, in the context of the quantitative isometric immersion problem.
We only recall the related setup and state the result, since the proof is exactly
the same as in \cite[Theorem 7.1]{lew_conv}.

\medskip

\subsection{Notation.}
By $\mathbb{R}^{d\times d}_{\sym}$ we denote the space of symmetric
$d\times d$ matrices. 
The space of H\"older continuous vector fields
$\mathcal{C}^{m,\alpha}(\bar\omega,\R^k)$ consists of restrictions of
all $f\in \mathcal{C}^{m,\alpha}(\mathbb{R}^d,\R^k)$ to the closure of
an open, bounded domain
$\omega\subset\R^d$. Then, the $\mathcal{C}^m(\bar\omega,\R^k)$ norm of such restriction is
denoted by $\|f\|_m$, while its H\"older norm in $\mathcal{C}^{m, \alpha}(\bar\omega,\R^k)$ is $\|f\|_{m,\alpha}$.
By $C>0$ we denote a universal constant which may change from line to
line, but which is bigger than $1$ and independent of all parameters, unless indicated otherwise.

\section{Convex integration: the basic ``step'' and preparatory statements}\label{sec_step}

The following single ``step'' construction, see \cite[Lemma 2.1, Corollary 2.2]{lew_conv}, is a building block of
the convex integration algorithm in this paper. We recall that a similar calculation in \cite{lewpak_MA}
based on \cite{CDS}, had $\bar\Gamma=0$ in the formula below, resulting in the presence of the extra term
$-\frac{2}{\lambda} a \dbar\Gamma(\lambda t_\eta)\sym(\nabla a \otimes
\eta)$ in the right hand side of (\ref{step_err}). With that term,
achieving the error bounds in Theorem \ref{thm_stage} would not be possible.
Namely, we have:

\begin{lemma}\label{lem_step}
Let $v\in \mathcal{C}^2(\R^d, \R^{k})$, $w\in \mathcal{C}^1(\R^d,
\R^{d})$ and $a\in \mathcal{C}^2(\R^d,\R)$ be given. Denote:
$$\Gamma(t) = 2\sin t,\quad \bar\Gamma(t) = -\frac{1}{2}\cos (2t), \quad
\dbar\Gamma(t) = -\frac{1}{2}\sin (2t),$$ 
and for two unit vectors $\eta\in\R^d$, $E\in \R^k$ and a frequency
$\lambda>0$, define:
\begin{equation}\label{defi_per}
\begin{split}
&\tilde v(x) = v(x) + \frac{1}{\lambda}a(x) \Gamma(\lambda t_\eta)E\\
& \tilde w(x) = w(x) -\frac{1}{\lambda}a(x) \Gamma(\lambda t_\eta)\nabla \langle v(x), E\rangle - 
\frac{1}{\lambda^2} a(x) \bar\Gamma(\lambda t_\eta)\nabla a(x)
+ \frac{1}{\lambda}a(x)^2 \dbar\Gamma(\lambda t_\eta)\eta.
\end{split}
\end{equation}
where $t_\eta = \langle x,\eta\rangle$. Then, the following identity is valid on $\R^d$:
\begin{equation}\label{step_err}
\begin{split}
& \big(\frac{1}{2}(\nabla \tilde v)^T \nabla \tilde v + \sym\nabla \tilde w\big) - 
\big(\frac{1}{2}(\nabla v)^T \nabla v + \sym\nabla w\big) - a^2\eta\otimes\eta 
\\ & = -\frac{1}{\lambda} a \Gamma(\lambda t_\eta)\nabla^2 \langle v, E\rangle +
\frac{1}{\lambda^2}\big(\frac{1}{2}\Gamma(\lambda
t_\eta)^2-\bar\Gamma(\lambda t_\eta)\big) \nabla a\otimes\nabla a -
\frac{1}{\lambda^2}a \bar\Gamma(\lambda t_\eta)\nabla^2a.
\end{split}
\end{equation}
\end{lemma}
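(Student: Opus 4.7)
My plan is to verify (\ref{step_err}) by a direct termwise expansion of both sides; the proof is entirely algebraic, driven by four pointwise identities among the trigonometric functions built into $(\Gamma, \bar\Gamma, \dbar\Gamma)$:
$$\Gamma(t)\Gamma'(t)=-4\dbar\Gamma(t),\qquad \bar\Gamma'(t)=-2\dbar\Gamma(t),\qquad \dbar\Gamma'(t)=2\bar\Gamma(t),\qquad \tfrac{1}{2}\Gamma'(t)^2+2\bar\Gamma(t)=1,$$
all immediate from $2\sin t\cos t=\sin 2t$ and $2\cos^2 t=1+\cos 2t$. Below I suppress the argument $\lambda t_\eta$ throughout.

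First I compute $\nabla\tilde v$ as the sum of three $k\times d$ matrices $\nabla v$, $\tfrac{1}{\lambda}\Gamma\,E\otimes\nabla a$, $a\Gamma'\,E\otimes\eta$, and expand $\tfrac{1}{2}(\nabla\tilde v)^T\nabla\tilde v$ into six contributions using $|E|=1$ and $(E\otimes p)^T(E\otimes q)=p\otimes q$. This yields $\tfrac{1}{2}(\nabla v)^T\nabla v$, the diagonal pieces $\tfrac{\Gamma^2}{2\lambda^2}\nabla a\otimes\nabla a$ and $\tfrac{a^2(\Gamma')^2}{2}\eta\otimes\eta$, and the symmetrized cross pieces $\tfrac{\Gamma}{\lambda}\sym(\nabla\langle v,E\rangle\otimes\nabla a)$, $a\Gamma'\sym(\nabla\langle v,E\rangle\otimes\eta)$, $\tfrac{a\Gamma\Gamma'}{\lambda}\sym(\nabla a\otimes\eta)$. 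In parallel I compute $\sym\nabla\tilde w-\sym\nabla w$ by applying $\nabla(\psi u)=u\otimes\nabla\psi+\psi\nabla u$ to each of the three correctors in $\tilde w$; each derivative landing on the oscillatory phase releases a factor $\lambda\eta$ that absorbs one factor of $\lambda^{-1}$. Using $\bar\Gamma'=-2\dbar\Gamma$ and $\dbar\Gamma'=2\bar\Gamma$, this produces $-\tfrac{\Gamma}{\lambda}\sym(\nabla\langle v,E\rangle\otimes\nabla a)-a\Gamma'\sym(\nabla\langle v,E\rangle\otimes\eta)-\tfrac{a\Gamma}{\lambda}\nabla^2\langle v,E\rangle$ from the first corrector, $-\tfrac{\bar\Gamma}{\lambda^2}\nabla a\otimes\nabla a+\tfrac{2a\dbar\Gamma}{\lambda}\sym(\nabla a\otimes\eta)-\tfrac{a\bar\Gamma}{\lambda^2}\nabla^2 a$ from the second, and $\tfrac{2a\dbar\Gamma}{\lambda}\sym(\nabla a\otimes\eta)+2a^2\bar\Gamma\,\eta\otimes\eta$ from the third.

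Adding everything and grouping by tensor type, the $\sym(\nabla\langle v,E\rangle\otimes\cdot)$ contributions cancel against their counterparts from $\nabla\tilde v$ term by term; the $\sym(\nabla a\otimes\eta)$ coefficients sum to $\tfrac{1}{\lambda}(a\Gamma\Gamma'+4a\dbar\Gamma)=0$ by the first identity; and the $\eta\otimes\eta$ coefficients sum to $a^2\bigl(\tfrac{1}{2}(\Gamma')^2+2\bar\Gamma\bigr)=a^2$ by the fourth identity, matching the $a^2\eta\otimes\eta$ subtracted on the left of (\ref{step_err}). What survives is exactly the right-hand side: the $\nabla a\otimes\nabla a$ term with coefficient $\tfrac{1}{\lambda^2}\bigl(\tfrac{1}{2}\Gamma^2-\bar\Gamma\bigr)$, and the two irreducible second-derivative errors $-\tfrac{a\Gamma}{\lambda}\nabla^2\langle v,E\rangle$ and $-\tfrac{a\bar\Gamma}{\lambda^2}\nabla^2 a$. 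There is no analytic obstacle; the only difficulty is careful sign bookkeeping, and the final calculation is algebraic. As a sanity check, deleting the second corrector in $\tilde w$ (i.e.\ replacing $\bar\Gamma$ by $0$, as in \cite{lewpak_MA,CDS}) suppresses one of the two $\tfrac{2a\dbar\Gamma}{\lambda}\sym(\nabla a\otimes\eta)$ contributions above and leaves precisely the uncancelled residual $-\tfrac{2}{\lambda}a\dbar\Gamma\,\sym(\nabla a\otimes\eta)$ flagged in the lemma's preamble, confirming the role of the new corrector.
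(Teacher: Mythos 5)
Your expansion is correct, and it is essentially the canonical direct verification of this identity. I reproduced your bookkeeping: writing $B=\nabla(\tilde v-v)=\tfrac{1}{\lambda}\Gamma\,E\otimes\nabla a+a\Gamma'\,E\otimes\eta$ gives, using $(\nabla v)^T(E\otimes p)=\nabla\langle v,E\rangle\otimes p$ and $(E\otimes p)^T(E\otimes q)=p\otimes q$ for unit $E$, the six pieces you list; each of the three $\tilde w$-correctors contributes the expected three terms, with $\bar\Gamma'=-2\dbar\Gamma$ and $\dbar\Gamma'=2\bar\Gamma$ converting the phase derivatives into $\dbar\Gamma$ and $\bar\Gamma$ respectively. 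Grouping by tensor type: the $\sym(\nabla\langle v,E\rangle\otimes\cdot)$ terms cancel exactly against the first corrector; the $\sym(\nabla a\otimes\eta)$ coefficient is $\tfrac{a}{\lambda}(\Gamma\Gamma'+4\dbar\Gamma)=0$; the $\eta\otimes\eta$ coefficient is $a^2(\tfrac12(\Gamma')^2+2\bar\Gamma)=a^2$; and the survivors are $\tfrac{1}{\lambda^2}(\tfrac12\Gamma^2-\bar\Gamma)\nabla a\otimes\nabla a$, $-\tfrac{a\Gamma}{\lambda}\nabla^2\langle v,E\rangle$ and $-\tfrac{a\bar\Gamma}{\lambda^2}\nabla^2 a$, which is exactly the right-hand side of (\ref{step_err}). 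The paper does not spell out a proof here (it simply cites \cite[Lemma 2.1, Corollary 2.2]{lew_conv}), so there is no alternate route to contrast your computation with; yours is the natural one. One small caveat on your closing sanity check: formally setting $\bar\Gamma\equiv 0$ does produce the flagged residual $-\tfrac{2}{\lambda}a\dbar\Gamma\,\sym(\nabla a\otimes\eta)$, but it also destroys the $\eta\otimes\eta$ cancellation (the coefficient $\tfrac12(\Gamma')^2+2\bar\Gamma$ no longer reduces to a constant), so the comparison with \cite{lewpak_MA,CDS} is heuristic rather than a literal specialization of the present profile functions. This does not affect the correctness of your proof of the lemma itself.
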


\smallskip

\noindent As pointed out in \cite{lew_conv}, taking several
perturbations in $\tilde v$ of the form 
$\frac{1}{\lambda}a_i\Gamma(\lambda t_{\eta_i}) E_i$ corresponding to
the mutually orthogonal directions $\{E_i\}_{i=1}k$ and matching them with 
perturbations in $\tilde w$ as in (\ref{defi_per}), achieves
cancellation of $k$ nonnegative primitive deficits of the form $a_i^2\eta_i\otimes\eta_i$
while the errors in (\ref{step_err}) accumulate in a linear
fashion. This is how we use the larger codimension $k$ to increase the
H\"older regularity in Theorem \ref{th_final}.

\bigskip

\noindent We will frequently call on the convolution and commutator estimates \cite[Lemma 2.1]{CDS}: 

\begin{lemma}\label{lem_stima}
Let $\phi\in\mathcal{C}_c^\infty(\R^d,\mathbb{R})$ be a standard
mollifier that is nonnegative, radially symmetric, supported on the
unit ball $B(0,1)\subset\R^d$ and such that $\int_{\mathbb{R}^d} \phi \dx = 1$. Denote: 
$$\phi_l (x) = \frac{1}{l^d}\phi(\frac{x}{l})\quad\mbox{ for all
}\; l\in (0,1], \;  x\in\R^d.$$
Then, for every $f,g\in\mathcal{C}^0(\mathbb{R}^d,\R)$ and every
$m\geq 0$, $\beta\in (0,1]$, there holds:
\begin{align*}
& \|\nabla^{(m)}(f\ast\phi_l)\|_{0} \leq
\frac{C}{l^m}\|f\|_0,\tag*{(\theequation)$_1$}\vspace{1mm} \refstepcounter{equation} \label{stima1}\\
& \|f - f\ast\phi_l\|_0\leq C \min\big\{l^2\|\nabla^{2}f\|_0,
l\|\nabla f\|_0, {l^\beta}\|f\|_{0,\beta}\big\},\tag*{(\theequation)$_2$} \vspace{1mm} \label{stima2}\\
& \|\nabla^{(m)}\big((fg)\ast\phi_l - (f\ast\phi_l)
(g\ast\phi_l)\big)\|_0\leq {C}{l^{2- m}}\|\nabla f\|_{0} \|\nabla g\|_{0}, \tag*{(\theequation)$_3$} \label{stima4}
\end{align*}
with a constant $C>0$ depending only on the differentiability exponent $m$.
\end{lemma}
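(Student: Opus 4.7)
The plan is to prove the three bounds in turn by direct estimation of the convolution integrals, using the normalization $\int_{\R^d}\phi_l=1$, the scaling $\phi_l(x)=l^{-d}\phi(x/l)$, and the support property $\mathrm{supp}\,\phi_l\subset\bar B_l(0)$.

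For \ref{stima1}, I would commute the derivative with the convolution, $\nabla^{(m)}(f\ast\phi_l) = f\ast\nabla^{(m)}\phi_l$, and apply Young's convolution inequality together with $\|\nabla^{(m)}\phi_l\|_{L^1} = l^{-m}\|\nabla^{(m)}\phi\|_{L^1}$, absorbing the $\phi$-dependent factor into $C$. For \ref{stima2}, I would use $\int\phi_l=1$ to rewrite $f(x)-(f\ast\phi_l)(x) = \int[f(x)-f(x-y)]\phi_l(y)\,dy$ and estimate the integrand pointwise on the support $|y|\leq l$. The bounds $l\|\nabla f\|_0$ and $l^\beta\|f\|_{0,\beta}$ are immediate from the respective moduli of continuity of $f$. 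For the sharper $l^2\|\nabla^2 f\|_0$ bound, the radial symmetry of $\phi$ gives $\int y\phi_l(y)\,dy=0$, which allows the first-order Taylor term to be subtracted at no cost, leaving only the quadratic remainder $|f(x)-f(x-y)+\nabla f(x)\cdot y|\leq\tfrac12\|\nabla^2 f\|_0|y|^2$.

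The commutator estimate \ref{stima4} is the main point and the most delicate. The key algebraic step is the Constantin--E--Titi identity
\begin{equation*}
(fg)\ast\phi_l - (f\ast\phi_l)(g\ast\phi_l) = \tfrac12\int\!\!\int [f(z)-f(w)][g(z)-g(w)]\phi_l(x-z)\phi_l(x-w)\,dz\,dw,
\end{equation*}
which I would obtain by expanding both convolutions, inserting the redundant integration $1=\int\phi_l(x-w)\,dw$ into $(fg)\ast\phi_l(x)$ to match the structure of the product, and then symmetrizing in the dummy variables $z\leftrightarrow w$. The virtue of this form is that all $x$-dependence sits in the two mollifier factors, so $\nabla^{(m)}_x$ acts exclusively on $\nabla^{(m)}_x[\phi_l(x-z)\phi_l(x-w)]$ and, by the Leibniz rule, produces a finite sum of terms $\nabla^j\phi_l(x-z)\otimes\nabla^{m-j}\phi_l(x-w)$ with $0\leq j\leq m$. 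No derivative ever lands on $f$ or $g$. Support considerations force $|z-w|\leq 2l$ whenever both mollifier factors are nonzero, giving pointwise bounds $|f(z)-f(w)|\leq 2l\|\nabla f\|_0$ and $|g(z)-g(w)|\leq 2l\|\nabla g\|_0$. Combining these with $\|\nabla^j\phi_l\|_{L^1}\leq Cl^{-j}$ from \ref{stima1} and applying Young's inequality to each Leibniz term yields the claimed $Cl^{2-m}\|\nabla f\|_0\|\nabla g\|_0$ after summing the $m+1$ contributions.

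The only conceptual obstacle is the symmetric commutator identity above: a naive differentiation of $(fg)\ast\phi_l-(f\ast\phi_l)(g\ast\phi_l)$ would inevitably put $\nabla^{m+1}f$ or $\nabla^{m+1}g$ on the right-hand side, which is fatal given that only $\|\nabla f\|_0$ and $\|\nabla g\|_0$ are assumed finite. The symmetrization trick ensures that all derivatives land on the smooth mollifier while the rough factors $f,g$ enter only through first-order increments; once this representation is in hand, the proof reduces to an elementary exercise in support and scaling bookkeeping.
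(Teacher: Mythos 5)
Your proof is correct and follows the standard route: the paper does not give its own argument but cites this as {[}CDS, Lemma~2.1{]}, where the same strategy is used. In particular, the symmetrized double-integral representation of the commutator (the Constantin--E--Titi identity) is exactly the key mechanism in the original, and your observations — that radial symmetry kills the first moment in \ref{stima2}, and that symmetrization keeps all $x$-derivatives off of $f$ and $g$ so that only first-order increments of $f$ and $g$ appear, each of size $O(l)$ on the support $|z-w|\leq 2l$ — are the heart of the matter.

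One small bookkeeping point to make explicit in a written version: after Leibniz, each term $\nabla^j\phi_l(x-z)\nabla^{m-j}\phi_l(x-w)$ contributes $\|\nabla^j\phi_l\|_{L^1}\|\nabla^{m-j}\phi_l\|_{L^1}\leq Cl^{-j}\cdot Cl^{-(m-j)} = Cl^{-m}$, independently of $j$, which combined with the $4l^2\|\nabla f\|_0\|\nabla g\|_0$ pointwise bound gives exactly $Cl^{2-m}$ with $C$ depending only on $m$ (and the fixed $\phi,d$), as claimed. You should also note, for honesty of presentation, that the $l^2$ and $l$ terms in \ref{stima2} require $f\in\mathcal{C}^2$ or $\mathcal{C}^1$ respectively; the lemma statement allows $f$ merely continuous, so the minimum is interpreted in the obvious way that a term with an infinite norm imposes no constraint.
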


\medskip

The next auxiliary result is specific to dimension $d=2$. We reformulate \cite[Proposition 3.1]{CS}: 

\begin{lemma}\label{lem_diagonal}
Let $\omega\subset\R^2$ be an open, bounded and Lipschitz set. There exist maps: 
$$\bar\Psi: L^2(\omega,\R^{2\times
  2}_\sym)\to W^{1,2}(\omega,\R^2), \qquad \bar a: L^2(\omega,\R^{2\times  2}_\sym)\to L^{2}(\omega,\R), $$ 
which are linear, continuous, and such that:
\begin{itemize}
\item[(i)] for all $D\in L^2(\omega,\R^{2\times 2}_\sym)$ there holds:
  $D+ \sym\nabla \big(\bar\Psi(D)\big) = \bar a(D)\Id_2$,\vspace{1mm}
\item[(ii)] $\bar\Psi(\Id_2) \equiv 0$ and $\bar a(\Id_2) \equiv 1$ in
  $\omega$, \vspace{1mm}
\item[(iii)] for all $m\geq 0$ and $\gamma\in (0,1]$, if $\omega$ is
  $\mathcal{C}^{m+2,\gamma}$ regular then the maps $\bar\Psi$ and $\bar a$ are continuous from
  $\mathcal{C}^{m,\gamma}(\bar\omega,\R^{2\times 2}_\sym)$ to
  $\mathcal{C}^{m+1,\gamma}(\bar\omega, \R^2)$ and
  to $\mathcal{C}^{m,\gamma}(\bar\omega, \R)$, respectively, so that:
\begin{equation}\label{diag_bounds}
\|\bar\Psi (D)\|_{m+1,\gamma}\leq C \|D\|_{m,\gamma} \mbox{ and }
~ \|\bar a (D)\|_{m,\gamma}\leq C \|D\|_{m,\gamma} \quad \mbox{ for all
}\; D\in L^2(\omega,\R^{2\times 2}_\sym).
\end{equation}
\end{itemize} 
The constants $C$ above depend on $\omega$, $m, \gamma$ but not on
$D$. Also, there exists $l_0>0$ depending only on $\omega$, such that
(\ref{diag_bounds}) are uniform on 
the closed $l$-neighbourhoods $\{\bar\omega+ \bar B_l(0)\}_{l\in (0,l_0)}$ of $\omega$.
\end{lemma}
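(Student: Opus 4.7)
The key structural observation is that the equation $D + \sym\nabla\Psi = a\,\Id_2$ splits naturally into its trace and traceless parts: taking the trace gives the scalar relation $a = \tfrac12\mathrm{tr}\,D + \tfrac12\nabla\cdot\Psi$, while applying the Saint-Venant operator $\curl\curl$ to both sides forces the compatibility $\Delta a = \curl\curl D$, since $\curl\curl\sym\nabla\Psi = 0$ and $\curl\curl(f\,\Id_2) = \Delta f$ for any scalar $f$. My plan is therefore to (i) define $\bar a(D)$ by solving this Poisson equation with a linear choice of boundary data; (ii) reconstruct $\bar\Psi(D)$ from the resulting compatible strain field $M := \bar a(D)\Id_2 - D$ via Saint-Venant integration, fixing the rigid-motion ambiguity by three linear normalizations; and (iii) verify the normalization $\bar\Psi(\Id_2)\equiv 0$, $\bar a(\Id_2)\equiv 1$ together with the required H\"older bounds.

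For step (i) I set $\bar a(D) = \tfrac12\mathrm{tr}\,D + \phi(D)$, where $\phi(D)\in H^1_0(\omega)$ is the unique weak solution of $\Delta\phi = \curl\curl D - \tfrac12\Delta(\mathrm{tr}\,D)$, interpreted distributionally for $D\in L^2$. Linearity of $\bar a$ is immediate, $L^2\to L^2$ continuity follows from the standard Poisson estimate on a Lipschitz domain, and when $D=\Id_2$ both the source and the boundary trace of $\phi$ vanish, so $\phi\equiv 0$ and $\bar a(\Id_2)\equiv 1$. For step (ii), the identity $\curl\curl(f\,\Id_2) = \Delta f$ yields $\curl\curl M = \Delta\bar a(D) - \curl\curl D = 0$ by the equation for $\phi$. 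On a (simply connected) Lipschitz domain the classical Saint-Venant reconstruction then produces $\bar\Psi(D)\in W^{1,2}(\omega,\R^2)$ with $\sym\nabla\bar\Psi(D) = M$, unique modulo infinitesimal rigid motions; I fix a canonical linear choice by imposing $\int_\omega\bar\Psi(D)\,\mathrm{d}x = 0$ and $\int_\omega\curl\bar\Psi(D)\,\mathrm{d}x = 0$, which together annihilate the three-dimensional kernel of rigid motions. For $D=\Id_2$ this forces $\bar\Psi(\Id_2)\equiv 0$, since $M = 0$ in that case.

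For the H\"older bounds in (iii), the key fact is that reconstructing $\bar\Psi$ from a compatible $M$ gains one derivative: after the rigid-motion normalization one has a Korn-type estimate
\[
\|\bar\Psi(D)\|_{m+1,\gamma}\le C\|\sym\nabla\bar\Psi(D)\|_{m,\gamma} = C\|M\|_{m,\gamma}\le C\|D\|_{m,\gamma}
\]
valid on a $\mathcal{C}^{m+2,\gamma}$ domain, and this combines with the Schauder estimate for $\phi$ to yield both inequalities in (\ref{diag_bounds}). The main technical obstacle, and the reason for the threshold $l_0>0$, is uniformity of these constants across the family $\{\bar\omega + \bar B_l(0)\}_{l\in(0,l_0)}$: smoothness of $\partial\omega$ supplies a smooth family of diffeomorphisms identifying each enlarged domain with $\omega$, and pulling back the elliptic problems yields operators with coefficients depending continuously on $l$, so the Poisson and Korn constants remain uniformly bounded for sufficiently small $l_0$. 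Altogether this construction is essentially the one in \cite[Proposition 3.1]{CS}, reorganized to make the linear and continuous dependence on $D$ needed by the iterative scheme of sections \ref{sec_stage}--\ref{sec4} transparent.
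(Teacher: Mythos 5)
Your structural observation is a nice way to \emph{explain} the paper's construction: taking the trace of $D+\sym\nabla\Psi = a\,\Id_2$ and applying $\curl\curl$ does force $\Delta a=\curl\curl D$, and this is exactly the equation the paper's $\bar a$ satisfies. But the way you set up the Poisson problem is not well-posed at the regularity claimed in the lemma, and this is precisely the obstacle the paper's formulation is designed to avoid.

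The difficulty is that your source term $\curl\curl D-\tfrac12\Delta(\mathrm{tr}\,D)$ is a sum of \emph{second} derivatives of $D$. For $D\in L^2(\omega)$ this lies only in $H^{-2}(\omega)$, so the variational formulation of $\Delta\phi=f$, $\phi\in H^1_0(\omega)$, does not apply: testing the equation against $\chi\in H^1_0$ would require integrating two derivatives off the source onto $\chi$, i.e.\ $\chi\in H^2$. Hence the claim that ``$L^2\to L^2$ continuity follows from the standard Poisson estimate'' is a gap; the standard estimate $\|\phi\|_{H^1}\le C\|f\|_{H^{-1}}$ would here require $D\in H^1$, not $D\in L^2$. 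The same problem recurs for the H\"older bounds in (iii) at $m=0$: with $D\in\mathcal C^{0,\gamma}$, the source is a distribution of negative order and the classical Schauder estimate does not give $\phi\in\mathcal C^{0,\gamma}$ directly (one would need a Schauder theory in negative H\"older scales, which is not ``standard''). The paper sidesteps this entirely by \emph{not} solving for $\bar a$ (or for $\phi$) directly; instead it introduces the potentials $\psi_1,\psi_2$ solving $\Delta\psi_1=D_{11}-D_{22}$ and $\Delta\psi_2=2D_{12}$ with homogeneous Dirichlet data. These problems have right-hand sides that are \emph{zeroth-order} in $D$, hence in $L^2$ (resp.\ $\mathcal C^{m,\gamma}$), so Lax–Milgram and classical Schauder apply verbatim; $\bar\Psi$ and $\bar a$ are then obtained by differentiating $\psi_1,\psi_2$ once and twice respectively, which puts the two extra derivatives on the unknowns, where they belong, rather than on the data.

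A secondary issue: your reconstruction of $\bar\Psi$ from the compatible field $M=\bar a(D)\Id_2-D$ via the Saint-Venant theorem requires $\omega$ to be simply connected (you note this parenthetically), but the lemma assumes only that $\omega$ is open, bounded and Lipschitz — and the subsequent use on $l$-neighbourhoods $\bar\omega+\bar B_l(0)$ inherits whatever topology $\omega$ has. The paper's explicit closed-form definition of $\bar\Psi$ in terms of $\psi_1,\psi_2$ produces a global vector field on any Lipschitz domain without invoking the annihilation of period integrals, so it is strictly more robust. If you reorganize your argument around the potentials $\psi_1,\psi_2$ rather than around the scalar $\phi$ and a Saint-Venant inversion, both gaps close simultaneously, and at that point your proof becomes the paper's.

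Finally, the Korn-type inequality $\|\bar\Psi\|_{m+1,\gamma}\le C\|\sym\nabla\bar\Psi\|_{m,\gamma}$ modulo rigid motions, which you invoke, does hold on sufficiently regular domains, and your argument for uniformity of constants across the family $\{\bar\omega+\bar B_l(0)\}$ via a continuous family of diffeomorphisms is a legitimate alternative to the paper's one-line appeal to uniformity of Schauder constants. Those parts of the plan are fine; the well-posedness of the Poisson problem for $\phi$ is the genuine sticking point.
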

\begin{proof}
Given $D\in L^2(\omega,\R^{2\times 2}_\sym)$, we define:
$$ \bar\Psi(D) = \big(-\partial_1\psi_1
- \partial_2\psi_2, \partial_2\psi_1 - \partial_1\psi_2\big), \qquad
\bar a(D) = D_{11}+\partial_1 \bar \Psi^1(D),$$
where $\psi_1, \psi_2$ are solutions to the following two Dirichlet problems on $\omega$:
\begin{equation}\label{sti_help}
\left\{\begin{array}{ll} \Delta\psi_1 = D_{11}-D_{22} & \mbox{ in }\omega\\
\psi_1 = 0 & \mbox{ on } \partial\omega,\end{array}\right.
\qquad \qquad
\left\{\begin{array}{ll} \Delta\psi_2 = 2D_{12} & \mbox{ in }\omega\\
\psi_2 = 0 & \mbox{ on } \partial\omega.\end{array}\right.
\end{equation}
It is clear that the maps $\bar\Psi$ and $\bar a$ are linear and satisfy (ii)
and (iii). To check condition (i), we calculate components of the symmetric
matrix field $\bar a \Id_2 - \sym\nabla \bar\Psi$:
\begin{equation*}
\begin{split}
& ~\bar a - \partial_1\bar\Psi^1 = D_{11},\\
& ~\bar a - \partial_1\bar\Psi^2 = D_{11} + \partial_1\bar\Psi^1
-\partial_2\bar\Psi^2 = D_{11}+ (-\partial_{11}\psi_1
- \partial_{12}\psi_1) - (\partial_{22}\psi_1 -\partial_{12}\psi_2)
\\ & \hspace{1.65cm}= D_{11} - \Delta\psi_1 = D_{22}, \\
& -\frac{1}{2}(\partial_1\bar\Psi^2+\partial_2\bar\Psi^1) =
-\frac{1}{2}\big(\partial_{12}\psi_1 - \partial_{11}\psi_2
-\partial_{12}\psi_1 - \partial_{22}\psi_2\big)=\frac{1}{2}\Delta\psi_2 = D_{12}. 
\end{split}
\end{equation*}
This completes the proof of (i).
The uniformity of the bounds in (\ref{diag_bounds}) follow from
the uniformity of the classical Schauder estimates for
solutions to (\ref{sti_help}). The proof is done.
\end{proof}

\smallskip

\noindent We remark that for a general dimension $d\geq 2$, carrying
out the same approach as presented in this paper would necessitate validating the following: 

\begin{conj}\label{conje}
Let $\omega\subset\R^d$ be an open, bounded, sufficiently regular
set. Then, there exist a linear proper subspace $E_d\varsubsetneqq\R^{d\times d}_\sym$ and linear maps:
$$ \bar\Psi:\mathcal{C}^{m,\gamma}(\bar\omega,\R^{d\times d}_\sym)\to 
\mathcal{C}^{m+1,\gamma}(\bar\omega,\R^{d}),\qquad 
\bar A:\mathcal{C}^{m,\gamma}(\bar\omega,\R^{d\times d}_\sym)\to 
\mathcal{C}^{m,\gamma}(\bar\omega,E_d),$$
continuous for all $m\geq 0$ and $\gamma\in (0,1]$, and such that:
\begin{itemize}
\item[(i)] for all $D\in \mathcal{C}^{m,\gamma}(\bar\omega,\R^{d\times d}_\sym)$ there holds:
  $D+ \sym\nabla \big(\bar\Psi(D)\big) = \bar A(D)$,\vspace{1mm}
\item[(ii)] $\bar\Psi(\Id_d) \equiv 0$ and $\bar A(\Id_d) \equiv \Id_d$ in $\omega$.
\end{itemize} 
\end{conj}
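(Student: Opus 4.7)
The plan is to generalize the construction of Lemma~\ref{lem_diagonal}, keeping in mind the interplay between the operator $\sym\nabla$ acting on vector fields with Dirichlet data and the Saint--Venant compatibility operator $\mathrm{Inc}=\curl\curl$ on symmetric tensor fields. In two dimensions, the scalar character of $\mathrm{Inc}$ is exactly what allowed the reduction to the two uncoupled Poisson problems~(\ref{sti_help}), and the choice $\bar A(D)=\bar a(D)\Id_2$ succeeded because the cokernel of $\sym\nabla$ modulo compatibility is one-dimensional and generated by $\Id_2$. For general $d$, this cokernel has dimension $\tfrac{d(d-1)}{2}$ after the Bianchi reduction (the linearized second Bianchi identity forces $\mathrm{div}\,\mathrm{Inc}(\epsilon)\equiv 0$), so any candidate $E_d$ would have to be enlarged to contain $\Id_d$ together with a complement carrying the remaining incompatibilities, while still remaining a proper subspace of $\R^{d\times d}_\sym$.

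The concrete construction I would attempt is the following. First, I would fix $E_d$ as the span of $\Id_d$ and a carefully chosen complement $F_d\subset\R^{d\times d}_\sym$, arranged so that every symmetric matrix field on $\omega$ admits a decomposition as an element of $E_d$ plus a symmetric gradient of a vector field vanishing on $\partial\omega$. Second, for a given $D$, I would define $\bar\Psi(D)$ as the unique solution of a system of elliptic boundary-value problems generalizing~(\ref{sti_help}), whose right-hand sides are linear combinations of the components of $D$ lying in the complement of $E_d$; the element $\bar A(D)\in E_d$ would then be read off algebraically from the identity $D+\sym\nabla\bar\Psi(D)=\bar A(D)$, which by construction lands in $E_d$. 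Third, linearity of $D\mapsto(\bar\Psi(D),\bar A(D))$, the Schauder bounds~(\ref{diag_bounds}), and their uniformity on the thickenings $\bar\omega+\bar B_l(0)$ would follow from the Agmon--Douglis--Nirenberg theory applied to this system. Property~(ii) would be automatic: substituting $D=\Id_d$ makes the Poisson right-hand sides vanish, so $\bar\Psi(\Id_d)\equiv 0$ and $\bar A(\Id_d)\equiv \Id_d$.

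The principal obstacle, and presumably the reason the statement is posed only as a conjecture, is the explicit identification of $E_d$ together with boundary conditions that turn the associated system into a well-posed elliptic problem in the Agmon--Douglis--Nirenberg sense. In two dimensions one is rescued by the fortunate decoupling into two scalar Dirichlet Laplacians in~(\ref{sti_help}); for $d\geq 3$ the Saint--Venant operator $\mathrm{Inc}$ is a genuine tensor-valued system, and no canonical scalar decoupling is available. Verifying the complementing (Lopatinskii) condition at $\partial\omega$, for any natural boundary data that enforce Dirichlet vanishing of $\bar\Psi$, is the technical crux; a secondary difficulty is to show that the resulting Schauder estimates are stable in every $\mathcal{C}^{m,\gamma}$ norm and uniform across the family of thickenings $\bar\omega+\bar B_l(0)$, as required for the construction to plug into an analogue of Theorem~\ref{thm_stage}.
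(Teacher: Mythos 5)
What you have written is a program, not a proof, and the honest point of comparison is that the paper does not prove this statement either: it is stated as a conjecture, and the only proved instance is $d=2$ (Lemma \ref{lem_diagonal}), where $E_2=\{\alpha\,\Id_2\}$ and the construction decouples into the two scalar Dirichlet problems (\ref{sti_help}); for $d=3$ the paper merely remarks that a version with $E_3$ the diagonal matrices, \emph{without} the H\"older estimates, can be obtained in the analytic class via Cartan--K\"ahler, and for $d\geq 4$ it only records the expected dimension $\dim E_d=\frac{d(d-1)}{2}$. So your proposal cannot be measured against a proof in the paper, and on its own terms it does not close the statement.

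Concretely, every step that carries the mathematical content is left unresolved in your sketch: (a) the subspace $E_d$ is never identified — ``$\Id_d$ plus a carefully chosen complement $F_d$'' is exactly the object the conjecture asks for, and the requirement that $E_d$ be a \emph{proper} subspace while still absorbing the full cokernel of $\sym\nabla$ is the nontrivial algebraic constraint (the heuristic count via $\mathrm{div}\,\mathrm{Inc}=0$ suggests $\frac{d(d-1)}{2}$, consistent with the paper's expectation, but a dimension count is not a decomposition); (b) the ``system of elliptic boundary-value problems generalizing (\ref{sti_help})'' is not written down, and you yourself note that verifying the Lopatinskii--Shapiro condition for whatever boundary data one imposes is the crux — without exhibiting the system and checking that condition, the Agmon--Douglis--Nirenberg/Schauder estimates you invoke, and hence the claimed continuity $\mathcal{C}^{m,\gamma}\to\mathcal{C}^{m+1,\gamma}$ and the uniformity over the thickenings $\bar\omega+\bar B_l(0)$, have no basis; (c) property (ii) is only ``automatic'' once (a) and (b) are in place, since it relies on the right-hand sides of the as-yet-unspecified system vanishing at $D=\Id_d$. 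In short, your text correctly diagnoses why the statement is open (the absence of a scalar decoupling of the Saint--Venant operator for $d\geq 3$ and the boundary-condition/complementing issue), but it does not supply the missing decomposition, and acknowledging the obstruction is not a substitute for overcoming it. If you want to make partial progress that goes beyond the paper, the concrete targets would be: a proof for $d=3$ with $E_3$ the diagonal matrices \emph{including} the $\mathcal{C}^{m,\gamma}$ bounds (i.e. upgrading the Cartan--K\"ahler/smooth-class remark to a well-posed elliptic boundary-value formulation), or a counterexample showing that no proper $E_d$ works with Dirichlet-type data.
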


\noindent Indeed, Conjecture \ref{conje} would  imply flexibility of (\ref{VK})
and of the Monge-Ampere system (\ref{MA}) up to regularity
$\mathcal{C}^{1, \frac{1}{1+2(\mathrm{dim}\,E_d)/k}}$.
Lemma \ref{lem_diagonal} validates Conjecture \ref{conje} for $d=2$ and with:
$$E_2=\big\{\alpha \Id_2;~\alpha\in\R\big\},$$ 
reflecting the fact that every $2$-dimensional Riemann
metric is conformally equivalent to the Euclidean metric. 
For $d=3$, it is natural to ask if Conjecture \ref{conje} holds with the space:
$$E_3=\big\{\sum_{i=1}^3\alpha_ie_i\otimes e_i;~ \alpha_1,\alpha_2,
\alpha_3\in \R\big\}$$ 
consisting of diagonal matrices, which is motivated by and in agreement
with the fact that every $3$-dimensional metric is locally
diagonalizable. This result, without the H\"older norms estimates, may
be proved in the analytic class by an application of the
Cartan-K\"ahler theorem, and in the smooth class by a direct inspection.
For $d\geq 4$, one expects the optimal dimension:
$$\mbox{dim}\, E_d=\mbox{dim}\, \R^{d\times d}_\sym - d=
\frac{d(d-1)}{2}.$$ 
With the above, Conjecture \ref{conje} would imply flexibility up to regularity
$\mathcal{C}^{1, \frac{1}{1+d(d-1)/k}}$, while we recall that the best exponent
known at present, from \cite{lew_conv}, is $\mathcal{C}^{1, \frac{1}{1+d(d+1)/k}}$.

\bigskip

\noindent As the final preparatory result, we recall the ``first step''
in the Nash-Kuiper iteration, al\-lo\-wing to bring the sup-norm of
the given positive definite deficit, below a threshold
needed for an application of Theorem \ref{th_NashKuiHol}.
This result is independent from the H\"older continuity
estimates; its proof only necessitates the decomposition of symmetric positive definite matrices
which are close to $\Id_d$, into ``primitive metrics'' \cite[Lemma 5.2]{CDS}.
Namely, from \cite[Theorem 5.2]{lew_conv} we quote:

\begin{lemma}\label{th_approx_nonlocal}
Let $\omega\subset\R^d$ be an open, bounded set. Given
$v\in\mathcal{C}^\infty(\bar\omega,\R^k)$, $w\in\mathcal{C}^\infty(\bar\omega,\R^d)$ and
$A\in\mathcal{C}^\infty(\bar\omega,\R^{d\times d}_\sym)$, assume that:
$$\mathcal{D}=A-\big(\frac{1}{2}(\nabla v)^T\nabla v + \sym\nabla
w\big) \quad\mbox{ satisfies } \quad \mathcal{D}>c\,\Id_d \; \mbox{ on }
\; \bar\omega$$
for some $c>0$, in the sense of matrix inequalities. Fix
$\epsilon>0$. Then, there exist $\tilde v\in\mathcal{C}^\infty(\bar
\omega,\R^k)$, $\tilde w\in\mathcal{C}^\infty(\bar\omega,\R^d)$ such that
the following holds with constants $C$ depending on $d, k$ and $\omega$:
\begin{align*}
& \|\tilde v - v\|_0\leq \epsilon, \quad \|\tilde w - w\|_0\leq \epsilon,
\tag*{(\theequation)$_1$}\refstepcounter{equation} \label{Cbound1}\vspace{1mm}\\ 
& \|\nabla (\tilde v-v)\|_0\leq C \|\mathcal{D}\|_0^{1/2}, \quad \|\nabla(\tilde w -
w)\|_0\leq C\|\mathcal{D}\|_0^{1/2}\big(\|\mathcal{D}\|_0^{1/2} +\|\nabla v\|_0\big),
\tag*{(\theequation)$_2$} \label{Cbound2}\vspace{1mm}\\
& \|A -\big(\frac{1}{2}(\nabla \tilde v)^T\nabla \tilde v + \sym\nabla
\tilde w\big)\|_0\leq \epsilon. \tag*{(\theequation)$_3$} \label{Cbound3}
\end{align*}
\end{lemma}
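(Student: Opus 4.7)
The plan is to combine a smooth primitive-metric decomposition of $\mathcal{D}$ with a finite sequence of the single-step perturbations from Lemma \ref{lem_step}. Since $\mathcal{D}>c\,\Id_d$ is positive definite on $\bar\omega$, I would first produce a decomposition
$$
\mathcal{D}(x) \;=\; \sum_{i=1}^{N} a_i(x)^2\, \eta_i \otimes \eta_i \qquad \mbox{on } \bar\omega,
$$
where $\eta_i\in\R^d$ are unit vectors drawn from a finite set independent of $x$, the amplitudes $a_i\in\mathcal{C}^\infty(\bar\omega,\R)$ satisfy $\|a_i\|_0\leq C\|\mathcal{D}\|_0^{1/2}$, and $N=N(d)$. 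This is obtained from the local decomposition of positive definite matrices near $\Id_d$ in \cite[Lemma 5.2]{CDS} via a partition-of-unity argument that covers the compact image $\mathcal{D}(\bar\omega)$ in the open cone of positive definite symmetric matrices (after suitable rescaling to normalize each chart to a neighborhood of $\Id_d$).

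Next, I would iterate Lemma \ref{lem_step} exactly $N$ times, starting from $(v_0,w_0)=(v,w)$ and at step $i$ applying a single perturbation with amplitude $a=a_i$, direction $\eta=\eta_i$, a fixed codimension direction $E\in\R^k$, and frequency $\lambda=\lambda_i>0$ to be chosen. By (\ref{step_err}) each step cancels the primitive $a_i^2\eta_i\otimes\eta_i$ from the defect, leaving only an additive error of size $O(\lambda_i^{-1})$ depending on $\|a_i\|_2$ and $\|\nabla^2 v_{i-1}\|_0$. The crucial point is that the $\mathcal{C}^1$ cost of each step is independent of $\lambda_i$: differentiating (\ref{defi_per}), the $\lambda_i^{-1}$ prefactors cancel the $\lambda_i$ produced by $\Gamma'(\lambda_i t_{\eta_i})$, so
$$
\|\nabla(v_i-v_{i-1})\|_0\leq C\|a_i\|_0, \qquad \|\nabla(w_i-w_{i-1})\|_0\leq C\|a_i\|_0\bigl(\|a_i\|_0+\|\nabla v_{i-1}\|_0\bigr).
$$

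The frequencies would then be chosen inductively so large, $\lambda_1\ll\lambda_2\ll\ldots\ll\lambda_N$, that the $\mathcal{C}^0$-size of each perturbation in $v$ and $w$ and of each residual error term in (\ref{step_err}) is smaller than $\epsilon/N$. Summing over $i=1,\ldots,N$ yields \ref{Cbound1} and \ref{Cbound3} directly. For \ref{Cbound2}, telescoping the frequency-independent $\mathcal{C}^1$ bounds above and using $\|\nabla v_{i-1}\|_0\leq \|\nabla v\|_0+C\|\mathcal{D}\|_0^{1/2}$ (from the preceding steps) produces the required estimates with constants depending only on $N=N(d)$ and $k$.

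The main obstacle is the first step: globalizing the local CDS decomposition to a smooth primitive-metric decomposition of a positive definite matrix field of arbitrary size, retaining the sharp amplitude bound $\|a_i\|_0\leq C\|\mathcal{D}\|_0^{1/2}$. Once this decomposition is in hand, the remainder is essentially bookkeeping, because the $\mathcal{C}^1$ costs of each step do not depend on the chosen frequency, so the hierarchy $\lambda_i$ can be driven arbitrarily large without affecting \ref{Cbound2}, and the accumulated $\mathcal{C}^0$ errors in \ref{Cbound1}, \ref{Cbound3} become arbitrarily small.
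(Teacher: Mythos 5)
Your high-level strategy — iterate the single-step construction of Lemma~\ref{lem_step}, exploit that the $\mathcal{C}^1$ cost of each step is independent of the frequency $\lambda_i$, and drive the residual errors down by pushing $\lambda_1\ll\lambda_2\ll\cdots$ — is correct and is indeed how the paper proceeds. The amplitude bound you flag as the ``main obstacle'' is actually automatic: if $\mathcal{D}=\sum_i a_i^2\,\eta_i\otimes\eta_i$ with $|\eta_i|=1$, then taking traces gives $\sum_i a_i(x)^2=\operatorname{tr}\mathcal{D}(x)\leq d\|\mathcal{D}\|_0$, so $\|a_i\|_0\leq\sqrt d\,\|\mathcal{D}\|_0^{1/2}$ comes for free. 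The real gap is your assertion that $N=N(d)$. A partition-of-unity globalization of \cite[Lemma~5.2]{CDS} yields a number of primitives $N$ that depends on the compact image $\mathcal{D}(\bar\omega)$ inside the positive cone — in effect on the ``condition number'' $\|\mathcal{D}\|_0/c$ — because a \emph{fixed} finite set of directions $\eta_1,\dots,\eta_N$ cannot decompose positive matrices uniformly close to the boundary of the cone: at $D=\Id_d-(1-c)\,v\otimes v$ with $c\to 0$, every active direction must become orthogonal to $v$, and this cannot hold for all $v\in S^{d-1}$ with one fixed finite family. Since your $\mathcal{C}^1$ estimate in \ref{Cbound2} scales linearly in $N$ (sequential steps with different frequencies do not cancel in sup-norm), the constant $C$ you produce depends on $\mathcal{D}$, contradicting the claimed dependence only on $d,k,\omega$.

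The paper's own proof (from \cite[Theorem~5.2]{lew_conv}) avoids this entirely, and this is what the remark ``its proof only necessitates the decomposition of symmetric positive definite matrices which are close to $\Id_d$'' is signaling. Rather than decomposing $\mathcal{D}$ in one shot, one runs a \emph{countable} Nash--Kuiper iteration in which the deficit handled at each stage is, by construction, a small relative perturbation of a scalar multiple of $\Id_d$; only the near-identity decomposition of \cite[Lemma~5.2]{CDS}, with a fixed number $d_*=d(d+1)/2$ of primitives and a fixed radius $r_0$, is ever invoked. Because the scalar scale decreases geometrically, the cumulative $\mathcal{C}^1$ cost sums to $C(d,k,\omega)\|\mathcal{D}\|_0^{1/2}$ with a constant independent of $\mathcal{D}$. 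You should replace your one-shot partition-of-unity decomposition with such a stage-wise scheme; as written, the argument does not give the stated uniform constant.
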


\section{The ``stage'' for the $\mathcal{C}^{1,\alpha}$
  approximations: a proof of Theorem \ref{thm_stage}}\label{sec_stage}

The proof consists of several steps. The inductive construction
below is a refinement of \cite[Theorem 1.2]{lew_conv} in view of Lemma \ref{lem_diagonal}, 
allowing to decrease the number of primitive metrics in the deficit decomposition
from $2_*=3$ to $2$. Recall that all constants $C$, which may change from line to line of a calculation, are 
assumed to be larger than $1$, and they depend only on $\omega$, $k$,
$\gamma$ and the differentiability exponent $m$, whenever present. 

\bigskip

\noindent {\bf Proof of Theorem \ref{thm_stage}}

\smallskip

{\bf 1. (Preparing the data)} Let $l_0$ be as in Lemma
\ref{lem_diagonal} and fix $l<l_0$. Taking $\phi_l$
as in Lemma \ref{lem_stima}, we define the following smoothed data functions on
the $l$-thickened set $\bar\omega+\bar B_l(0)$:
$$v_0=v\ast \phi_l,\quad w_0=w\ast \phi_l, \quad A_0=A\ast \phi_l,
\quad {\mathcal{D}}_0= \big(\frac{1}{2}(\nabla v_0)^T\nabla v_0 + \sym\nabla w_0\big) - A_0.$$
From Lemma \ref{lem_stima}, one deduces the initial bounds: 
\begin{align*}
& \|v_0-v\|_1 + \|w_0-w\|_1 \leq C lM,
\tag*{(\theequation)$_1$}\refstepcounter{equation} \label{pr_stima1}\\
& \|A_0-A\|_0 \leq Cl^\beta\|A\|_{0,\beta}, \tag*{(\theequation)$_2$} \label{pr_stima2}\\
& \|\nabla^{(m+1)}v_0\|_0 + \|\nabla^{(m+1)}w_0\|_0\leq
\frac{C}{l^m} lM\quad \mbox{ for all }\; m\geq 1, \tag*{(\theequation)$_3$} \label{pr_stima3}\\
& \|\nabla^{(m)} \mathcal{D}_0\|_0\leq
\frac{C}{l^m} \big(\|\mathcal{D}\|_0 + (lM)^2\big)\quad \mbox{ for
  all }\; m\geq 0. \tag*{(\theequation)$_4$}\label{pr_stima4} 
\end{align*}
Indeed,  \ref{pr_stima1}, \ref{pr_stima2} follow from \ref{stima2} and
in view of the lower bound
on $M$. Similarly, \ref{pr_stima3} follows by applying \ref{stima1} to
$\nabla^2v$ and $\nabla^2w$ with the differentiability exponent $m-1$.
Since:
$$\mathcal{D}_0 = \frac{1}{2}\big((\nabla
v_0)^T\nabla v_0 - ((\nabla v)^T\nabla v)\ast\phi_l\big) -
\mathcal{D}\ast \phi_l,  $$ 
we get \ref{pr_stima4} by applying \ref{stima1} to $\mathcal{D}$, and
\ref{stima4} to $\nabla v$.

\medskip

{\bf 2. (Induction definition: frequencies)}  We now inductively define the main
coefficients, frequencies and corrections in the construction of ($\tilde
v,\tilde w)$ from $(v,w)$. First, recall that:
\begin{equation}\label{lcm_def}
N \doteq lcm(2,k) = 2S = kJ, \qquad S,J\geq 1.
\end{equation}
We set the initial perturbation frequencies as:
$$\lambda_0 = \frac{1}{l}, \qquad \lambda_1=\lambda,$$
while for $i=2\ldots N$ we define, for $j=0\ldots J-1$ and $s=0\ldots S-1$:
\begin{equation}\label{count_lam}
\lambda_i l = (\lambda l)^{1+j+s/2 }\quad \mbox{ for all }\; i\in (kj, k(j+1)]\cap (2s, 2(s+1)].
\end{equation}
\begin{figure}[htbp]
\centering
\includegraphics[scale=0.55]{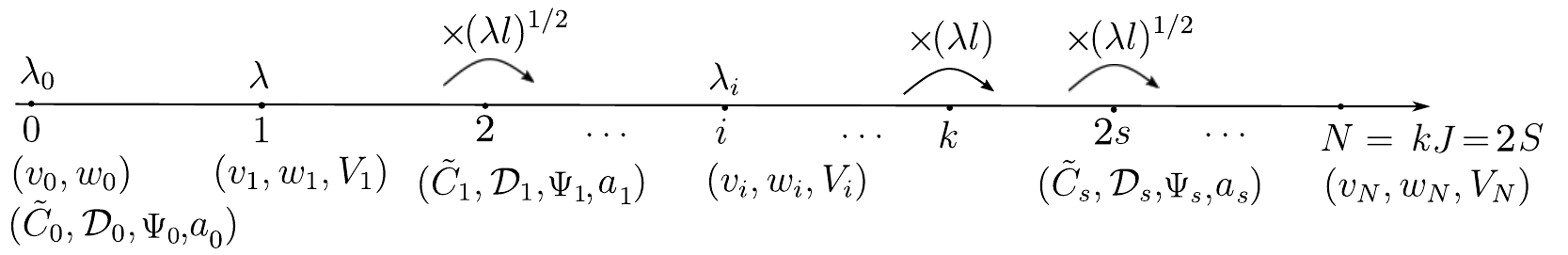}
\caption{{Progression of frequencies $\lambda_i$ and other
    intermediary quantities defined at integers $i=1\ldots N$, where $N= lcm(2,k)$.}}
\label{fig1}
\end{figure}

\medskip

{\bf 3. (Induction definition: decomposition of deficits)} For $s=0\ldots
S-1$ we define constants $\tilde C_s$, perturbation amplitudes
$a_s \in\mathcal{C}^\infty(\bar\omega+\bar B_l(0), \R)$ 
and correction fields $\Psi_s\in \mathcal{C}^\infty(\bar\omega+\bar B_l(0),
\R^2)$, by applying Lemma \ref{lem_diagonal} to the already derived deficit
$\mathcal{D}_s$ on the set $\bar\omega+\bar B_l(0)$:
\begin{equation*}
\begin{split}
& \tilde C_s = \frac{2}{r_0}\Big(\|\mathcal{D}_s\|_{0,\gamma}+\frac{(\lambda_0\lambda_2\ldots
  \lambda_{2s})^\gamma}{(\lambda l)^s} (\|\mathcal{D}\|_0+ (lM)^2)\Big),\\ 
& a_s = \big(\tilde C_s - \bar a(\mathcal{D}_s)\big)^{1/2}, \qquad 
\Psi_s = \tilde C_s id_2 - \bar\Psi(\mathcal{D}_s).
\end{split}
\end{equation*}
Above, $r_0=r_0(\gamma)>0$ is given through the requirement:
$$\bar a(D) >\frac{1}{2} ~\mbox{ on }~\bar\omega+\bar B_l(0)\quad
\mbox{ whenever } \quad \|D-\Id_2\|_{0,\gamma}<r_0,$$
whose validity is justified by Lemma \ref{lem_diagonal}. Note that
our definition of $a_s$ is correctly posed, because
$\tilde C_s - \bar a(\mathcal{D}_s) = \tilde C_s \bar a\big(\Id_2-\frac{1}
{\tilde C_s}\mathcal{D}_s\big)>0$ in view of $\|\Id_2 - (\Id_2-
\frac{1}{\tilde C_s}\mathcal{D}_s)\|_{0,\gamma}<r_0$. 
Further:
\begin{equation}\label{low_bd_as}
\mathcal{D}_s= \sym\nabla \Psi_s - a_s^2 \Id_2 \quad\mbox{ and }\quad a_s>
\Big(\frac{\tilde C_s}{2}\Big)^{1/2} \;\mbox{ in } \;\bar\omega+\bar B_l(0).
\end{equation}
We also obtain, directly from Lemma \ref{lem_diagonal}: 
\begin{equation}\label{as0}
\begin{split}
& \|\Psi_s\|_{m+1} \leq
C\big(\tilde C_s + \|\mathcal{D}_s\|_{m,\gamma}\big)\quad \mbox{ for all }\; m\geq 0,\\
& \|a_s\|_0\leq C\| \tilde C_s \Id_d -
\mathcal{D}_s\|_{0,\gamma}^{1/2}\leq C \tilde C_s^{1/2}.
\end{split}
\end{equation}
For the future estimate of derivatives of $a_s$ of order $m\geq 1$, we
use Fa\'a di Bruno formula's in:
\begin{equation}\label{asm}
\begin{split}
\|\nabla^{(m)}a_s\|_0 & \leq C \Big\|\sum_{p_1+2p_2+\ldots
  mp_m=m} a_s^{2(1/2-p_1-\ldots -p_m)}\prod_{t=1}^m \big|\nabla^{(t)}a_s^2\big|^{p_t}\Big\|_0\\
& \leq C \sum_{p_1+2p_2+\ldots mp_m=m}\frac{1}{\tilde C_s^{(p_1+\ldots+p_m)-1/2}}
\prod_{t=1}^m \big( \tilde C_s + \|\mathcal{D}_s\|_{t,\gamma}\big)^{p_t} \\ & \leq {C}{\tilde
  C_s^{1/2}} \sum_{p_1+2p_2+\ldots mp_m=m}\prod_{t=1}^m
\Big(1+\frac{\|\mathcal{D}_s\|_{t,\gamma}}{\tilde C_s}\Big)^{p_t},
\end{split}
\end{equation}
in virtue of the lower bound in (\ref{low_bd_as}). 

\medskip

{\bf 4. (Induction definition: perturbations)} For each $i=1\ldots N$ we uniquely write:
\begin{equation}\label{isj}
\begin{split}
i=kj+\gamma = 2s + \delta \quad \mbox{ with } \quad  & j=0\ldots J-1,
\quad \gamma=1\ldots k, \\ &  s=0\ldots S-1, \quad \delta=1,2.
\end{split}
\end{equation}
Define $v_{i}\in\mathcal{C}^\infty(\bar\omega+\bar B_l(0),\R^k)$ and
$w_i\in\mathcal{C}^\infty(\bar\omega+\bar B_l(0), \R^2)$ according to the ``step''
construction in Lemma \ref{lem_step}, involving the periodic profile functions
$\Gamma, \bar\Gamma, \dbar\Gamma$ and the notation $t_\eta=\langle x, \eta\rangle$:
\begin{equation*}
\begin{split}
& v_i(x) = v_{i-1}(x) + \frac{1}{\lambda_i} a_s(x)\Gamma(\lambda_i t_{e_\delta})e_\gamma,\\
& w_i(x) = w_{i-1}(x) - \frac{1}{\lambda_i} a_s(x)\Gamma(\lambda_it_{e_\delta})\nabla v_{i-1}^\gamma 
-  \frac{1}{\lambda_i^2} a_s(x)\bar\Gamma(\lambda_it_{e_\delta})\nabla a_s
+  \frac{1}{\lambda_i} a_s(x)^2\dbar\Gamma(\lambda_it_{e_\delta})e_\delta.
\end{split}
\end{equation*}
We observe that by construction of $v_i$, the second term in $w_{i}$ can be rewritten as follows:
\begin{equation}\label{w_simp}
\frac{1}{\lambda_i} a_s(x)\Gamma(\lambda_it_{e_\delta})\nabla v_{i-1}^\gamma 
= \frac{1}{\lambda_i} a_s(x)\Gamma(\lambda_it_{e_\delta})\nabla v_{jk}^\gamma. 
\end{equation}
We eventually set:
\begin{equation}\label{vw_fin}
\tilde v = v_N,\qquad \tilde w = w_N-\sum_{s=0}^{S-1}\Psi_s.
\end{equation}

\medskip

{\bf 5. (Induction definition: deficits)} For each $i=1\ldots N$, we define the partial deficit:
$$ V_i = \big(\frac{1}{2}(\nabla v_i)^T\nabla v_i+\sym\nabla
w_i\big) - \big(\frac{1}{2}(\nabla v_{i-1})^T\nabla v_{i-1}+\sym\nabla w_{i-1}\big),$$
and for each $s=1\ldots S$ we set the combined deficit:
$\mathcal{D}_s\in\mathcal{C}^\infty(\bar\omega +\bar B_l(0), \R^{2\times 2}_\sym)$ in:
\begin{equation*}
\begin{split}
\mathcal{D}_s  = & \; \big(\frac{1}{2}(\nabla v_{2s})^T\nabla v_{2s}+\sym\nabla
w_{2s}\big) - \big(\frac{1}{2}(\nabla v_{2(s-1)})^T\nabla v_{2(s-1)}+\sym\nabla w_{2(s-1)}\big)
-a_{s-1}^2\Id_2
\\ = & \sum_{i=2s-1}^{2s} \Big(V_i - a_{s-1}^2e_{\delta}\otimes
e_\delta\Big) = V_{2s-1} + V_{2s} - a_{s-1}^2\Id_2.
\end{split}
\end{equation*}
Above, components of the last sum we used the convention (\ref{isj}), where $\delta=\delta(i)=1,2$.
By Lemma \ref{lem_step} and (\ref{w_simp}), and setting $j=0\ldots
J-1$ again according to (\ref{isj}), we get:
\begin{equation}\label{prep_defi_s}
\begin{split}
V_i - a_{s-1}^2e_\delta\otimes e_\delta
 = & - \frac{1}{\lambda_i} a_{s-1} \Gamma(\lambda_i t_{e_\delta})\nabla^2 v_{jk}^\gamma
- \frac{1}{\lambda_i^2}a_{s-1} \bar\Gamma(\lambda_i t_{e_\delta})\nabla^2a_{s-1}
\\ & + \frac{1}{\lambda_i^2}\big(\frac{1}{2}\Gamma(\lambda_i
t_{e_\delta})^2-\bar\Gamma(\lambda_i t_{e_\delta})\big) \nabla a_{s-1}\otimes\nabla a_{s-1}.
\end{split}
\end{equation}
We right away note that, by (\ref{low_bd_as}) there holds:
\begin{equation}\label{pomag}
\begin{split}
\tilde{\mathcal{D}} & = (A-A_0) -\mathcal{D}_0 -
\Big(\big(\frac{1}{2}(\nabla\tilde v)^T\nabla \tilde v + \sym\nabla \tilde w\big)
- \big(\frac{1}{2}(\nabla v_0)^T\nabla v_0 + \sym\nabla w_0\big)\Big) \\ & = 
(A-A_0) -\mathcal{D}_0 +\sum_{s=0}^{S-1}\sym\nabla\Psi_s - \sum_{s=1}^{S}\sum_{i=2s-1}^{2s}V_i
\\ & = (A-A_0) +\sum_{s=0}^{S-1}\sym\nabla\Psi_s  - \sum_{s=0}^S\mathcal{D}_s -
\sum_{s=1}^{S}a_{s-1}^2\Id_2 = (A-A_0) -\mathcal{D}_S.
\end{split}
\end{equation}

\medskip

{\bf 6. (Inductive estimates)} In steps 7-8 below we will show the
following estimates, valid for all $m\geq -1 $ and $i=1\ldots N$,
and where $s=s(i)$ is given according to (\ref{isj}): 
\begin{align*}
& \hspace{-3mm} \left. \begin{array}{l} \|\nabla^{(m+1)}(v_i - v_{i-1})\|_0\leq
    C\displaystyle{ \frac{\lambda_i^m}{(\lambda l)^{s/2}}}\big(\lambda_0\lambda_2\ldots
    \lambda_{2s}\big)^{\gamma/2}\big(\|\mathcal{D}\|_0^{1/2}+lM\big),  \vspace{1mm} \\ 
\|\nabla^{(m+1)}(w_i -w_{i-1})\|_0\leq
C\displaystyle{\frac{\lambda_i^m}{(\lambda l)^{s/2}}}\big(\lambda_0\lambda_2\ldots 
    \lambda_{2s}\big)^{\gamma}\big(\|\mathcal{D}\|_0^{1/2}+lM\big) \times \\
\qquad\qquad\qquad\qquad \qquad\qquad\qquad \qquad \qquad\qquad 
\times \big(\|\mathcal{D}\|_0^{1/2}+lM+\|\nabla v\|_0\big),
\end{array}\right\}
\tag*{(\theequation)$_1$}\refstepcounter{equation} \label{Fbound1}
\end{align*}
Also, for all $m\geq 0$ and $s=0\ldots S$ we will prove that:
\begin{align*}
& \hspace{-3mm} \|\mathcal{D}_s\|_m\leq C\frac{\lambda_{2s}^m}{(\lambda l)^s}
\big(\lambda_0\lambda_2\ldots \lambda_{2(s-1)}\big)^{\gamma} \big(\|\mathcal{D}\|_0+(lM)^2\big).
\tag*{(\theequation)$_2$}\label{Fbound2} 
\end{align*}
Note that the bound \ref{Fbound2} at its
lowest counter value $s=0$, follows in view of \ref{pr_stima4}, and since 
$\lambda_0 = \frac{1}{l}$.
We further observe that, using interpolation and the preparatory bound
(\ref{asm}), the estimate \ref{Fbound2} easily implies for all $m\geq 0$ and $s=0\ldots S-1$:
\begin{align*} 
& \hspace{-3mm} \left. \begin{array}{l} \tilde C_s\leq C\displaystyle\frac{1}{(\lambda l)^s} 
\big(\lambda_0\lambda_2\ldots \lambda_{2s}\big)^{\gamma} \big(\|\mathcal{D}\|_0+(lM)^2\big), \vspace{1mm} \\
\|\Psi_s\|_{m+1}\leq  C\displaystyle{ \frac{\lambda_{2s}^m}{(\lambda l)^{s}}}\big(\lambda_0\lambda_2\ldots
    \lambda_{2s}\big)^{\gamma}\big(\|\mathcal{D}\|_0+(lM)^2\big),  \vspace{1mm} \\
 \|a_s\|_m\leq \displaystyle {C\frac{\lambda_{2s}^m}{(\lambda l)^{s/2}}}\big(\lambda_0\lambda_2\ldots
    \lambda_{2s}\big)^{\gamma/2}\big(\|\mathcal{D}\|_0^{1/2}+ lM\big).
\end{array}\right\} 
\tag*{(\theequation)$_3$}\label{Fbound3}
\end{align*}

\bigskip

{\bf 7. (Proof of estimate \ref{Fbound1})} With $s, j, \delta, \gamma$ as
in (\ref{isj}), definition of $v_i$ in step 4 yields:
\begin{equation*}
\begin{split}
\|\nabla^{(m+1)}(v_i-v_{i-1})\|_0 & \leq C
\sum_{p+q=m+1}\lambda_i^{p-1}\|\nabla^{(q)}a_s\|_0 \\ & \leq 
C\lambda_i^m\sum_{q=0}^{m+1}\frac{1}{\lambda_i^q}\frac{\lambda_{2s}^q}{(\lambda
  l)^{s/2}}\big(\lambda_0\lambda_2\ldots\lambda_{2s}\big)^{\gamma/2}\big(\|\mathcal{D}\|_0^{1/2}
+ lM\big),
\end{split}
\end{equation*}
where we used the induction assumption \ref{Fbound3}.
The first bound in \ref{Fbound1} then follows, because
$\lambda_{2s}\leq \lambda_i$, due to $2s<i$. For bounding the
$w$-increment we write, recalling (\ref{w_simp}):
\begin{equation}\label{ww}
\begin{split}
& \|\nabla^{(m+1)}(w_i-w_{i-1})\|_0 \leq
C\sum_{p+q+t=m+1}\lambda_i^{p-1}\|\nabla^{(q)}a_s\|_0 \|\nabla^{(t+1)}v_{jk}\|_0 
\\ & \qquad \qquad + C\sum_{p+q+t=m+1}\Big(\lambda_i^{p-2}\|\nabla^{(q)}a_s\|_0 \|\nabla^{(t+1)}a_s\|_0 
+ \lambda_i^{p-1}\|\nabla^{(q)}a_s\|_0 \|\nabla^{(t)}a_s\|_0 \Big)
\end{split}
\end{equation}
We split the first term in the right hand side above, according to
whether $t=0$ or $t\geq 1$:
\begin{equation}\label{waww}
\begin{split}
&\sum_{p+q+t=m+1}\lambda_i^{p-1}\|\nabla^{(q)}a_s\|_0
\|\nabla^{(t+1)}v_{jk}\|_0 \\ & \qquad = \sum_{p+q=m+1}\lambda_i^{p-1}\|\nabla^{(q)}a_s\|_0 \|\nabla v_{jk}\|_0 
+ \sum_{p+q+t=m}\lambda_i^{p-1}\|\nabla^{(q)}a_s\|_0 \|\nabla^{(t+2)}v_{jk}\|_0 
\\ &\qquad \leq C\lambda_i^m\sum_{q=0}^{m+1}\frac{1}{\lambda_i^q}\frac{\lambda_{2s}^q}{(\lambda
l)^{s/2}}\big(\lambda_0\lambda_2\ldots
\lambda_{2s}\big)^{\gamma/2}\big(\|\mathcal{D}\|_0^{1/2}+lM\big)\|\nabla v_{jk}\|_0
\\ & \qquad\qquad + C\lambda_i^m\sum_{q+t=0\ldots m}\frac{1}{\lambda_i^{q+t+1}}\frac{\lambda_{2s}^q}{(\lambda
l)^{s/2}}\big(\lambda_0\lambda_2\ldots
\lambda_{2s}\big)^{\gamma/2}\big(\|\mathcal{D}\|_0^{1/2}+lM\big)\|\nabla^{(t+2)} v_{jk}\|_0
\\ & \qquad \leq C \frac{\lambda_i^m}{(\lambda l)^{s/2}} \big(\lambda_0\lambda_2\ldots
\lambda_{2s}\big)^{\gamma/2}\big(\|\mathcal{D}\|_0^{1/2}+lM\big)
\Big(\|\nabla v_{jk}\|_0 + \sum_{t=0}^m\frac{1}{\lambda_i^{t+1}}\|\nabla^{(t+2)}v_{jk}\|_0\Big)
\end{split}
\end{equation}
For every $t= 0\ldots m+1$, the inductive assumption \ref{Fbound1} gives:
\begin{equation}\label{wawa}
\begin{split}
\|\nabla^{(t+1)}v_{jk}\|_0 & \leq \|\nabla^{(t+1)}v_{0}\|_0 + \sum_{q=1}^{jk}\|\nabla^{(t+1)}(v_q-v_{q-1})\|_0
\\ & \leq \|\nabla^{(t+1)}v_{0}\|_0 +
C \sum_{q=1}^{jk}\frac{\lambda_q^t}{(\lambda
  l)^{s(q)/2}}\big(\lambda_0\lambda_2\ldots\lambda_{2s(q)}\big)^{\gamma/2} \big(\|\mathcal{D}\|_0^{1/2}+lM\big)
\end{split}
\end{equation}
Hence, for the case $t=0$ in (\ref{waww}), in virtue of \ref{pr_stima1} and since $jk<i$, we get directly:
\begin{equation*}
\begin{split}
\|\nabla v_{jk}\|_0 & \leq ClM + \|\nabla v\|_0 + 
C \sum_{q=1}^{jk}\frac{1}{(\lambda
  l)^{s(q)/2}}\big(\lambda_0\lambda_2\ldots\lambda_{2s(q)}\big)^{\gamma/2}
\big(\|\mathcal{D}\|_0^{1/2}+lM\big)\\ &
\leq ClM + \|\nabla v\|_0 + C \big(\lambda_0\lambda_2\ldots\lambda_{2s(jk)}\big)^{\gamma/2}
\big(\|\mathcal{D}\|_0^{1/2}+lM\big) \\ & 
\leq  C \big(\lambda_0\lambda_2\ldots\lambda_{2s(i)}\big)^{\gamma/2}
\big(\|\mathcal{D}\|_0^{1/2}+lM +\|\nabla v\|_0\big).
\end{split}
\end{equation*}
The same bound for $t=1\ldots m+1$, in view of \ref{pr_stima3}, implies that:
\begin{equation*}
\begin{split}
 \sum_{t=0}^m\frac{1}{\lambda_i^{t+1}}\|\nabla^{(t+2)}v_{jk}\|_0 &\leq 
C\sum_{t=0}^m \bigg(\frac{lM}{(\lambda_il)^{t+1}} + \sum_{q=1}^{jk}\frac{1}{(\lambda
  l)^{s(q)/2}}\big(\lambda_0\lambda_2\ldots\lambda_{2s(q)}\big)^{\gamma/2}
\big(\|\mathcal{D}\|_0^{1/2}+lM\big)\bigg) \\ & \leq C
\big(\lambda_0\lambda_2\ldots\lambda_{2s(i)}\big)^{\gamma/2} 
\big(\|\mathcal{D}\|_0^{1/2}+lM\big).
\end{split}
\end{equation*}
Thus, by (\ref{waww}) we see that the first term in the right hand
side of (\ref{ww}) is bounded by:
$$C \frac{\lambda_i^m}{(\lambda l)^{s/2}} \big(\lambda_0\lambda_2\ldots
\lambda_{2s}\big)^{\gamma/2}\big(\|\mathcal{D}\|_0^{1/2}+lM\big)
\big(\|\mathcal{D}\|_0^{1/2}+lM +\|\nabla v\|_0\big)$$
On the other hand, the second term in the right hand side of
(\ref{ww}) is likewise bounded by:
\begin{equation*}
\begin{split}
& C\lambda_m^i\sum_{q+t=0\ldots m+1}\Big(\frac{\lambda_{2s}^{q+t+1}}{\lambda_i^{q+t+1}}
+ \frac{\lambda_{2s}^{q+t}}{\lambda_i^{q+t}}\Big)\frac{1}{(\lambda
  l)^s}\big(\lambda_0\lambda_2\ldots\lambda_{2s}\big)^\gamma \big(\|\mathcal{D}\|_0+(lM)^2\big)
\\ & \leq C \frac{\lambda_i^m}{(\lambda l)^s} \big(\lambda_0\lambda_2\ldots\lambda_{2s}\big)^\gamma
\big(\|\mathcal{D}\|_0+(lM)^2\big), 
\end{split}
\end{equation*}
by \ref{Fbound3} and since $\lambda_{2s}\leq \lambda_i$. This completes the proof of the second
estimate in \ref{Fbound1}.

\medskip

{\bf 8. (Proof of estimate \ref{Fbound2})} Let $i\in (kj, k(j+1)]\cap
(2(s-1), 2s]$ with $j=0\ldots J-1$, $s=1\ldots S$, and denote  $\delta = i-2(s-1)$.
From (\ref{prep_defi_s}) we see that for all $m\geq 0$:
\begin{equation}\label{Vipom}
\begin{split}
& \big\|\nabla^{(m)} \big(V_i -a_{s-1}^2e_\delta\otimes e_\delta\big)\big\|_0 \leq 
C \sum_{p+q+t=m} \lambda_i^{p-1}\|\nabla^{(q)}a_{s-1}\|_0\|\nabla^{(t+2)}v_{jk}\|_0 
\\ & \quad + C \sum_{p+q+t=m}
\lambda_i^{p-2}\Big(\|\nabla^{(q+1)}a_{s-1}\|_0 \|\nabla^{(t+1)}a_{s-1}\|_0 
+ \|\nabla^{(q)}a_{s-1}\|_0\|\nabla^{(t+2)}a_{s-1}\|_0 \Big).
\end{split}
\end{equation}
By \ref{Fbound3}, (\ref{wawa}), \ref{pr_stima3} and the fact that
$\lambda_i\leq \lambda_{2s}$ we get:
\begin{equation*}
\begin{split}
& \sum_{p+q+t=m}  \lambda_i^{p-1}\|\nabla^{(q)}a_{s-1}\|_0\|\nabla^{(t+2)}v_{jk}\|_0 
\\ & \quad \leq C\lambda_i^m \sum_{q+t=0\ldots m}\frac{1}{\lambda_i^{q+t+1}}
\frac{\lambda_{2(s-1)}^q}{(\lambda l)^{(s-1)/2}}
\big(\lambda_0\lambda_2\ldots\lambda_{2(s-1)}\big)^{\gamma/2}\big(\|\mathcal{D}\|_0^{1/2}+ 
lM\big) \times \\ & \quad \qquad \qquad \qquad \quad \times \bigg( \frac{lM}{l^{t+1}}
+\sum_{r=1}^{jk}\frac{\lambda_r^{t+1}}{(\lambda l)^{s(r)/2}}
\big(\lambda_0\lambda_2\ldots\lambda_{2(s-1)}\big)^{\gamma/2}
\big(\|\mathcal{D}\|_0^{1/2}+lM\big)\bigg)
\\ & \quad \leq C \frac{\lambda_i^m}{(\lambda l)^{(s-1)/2}}
\big(\lambda_0\lambda_2\ldots\lambda_{2(s-1)}\big)^{\gamma}
\big(\|\mathcal{D}\|_0+(lM)^2\big) \sum_{t=0}^m\bigg(\frac{1}{(\lambda_il)^{t+1}} + 
\sum_{r=1}^{jk}\frac{\lambda_r^{t+1}}{\lambda_i^{t+1}(\lambda l)^{s(r)/2}}\bigg)
\\ & \quad \leq C \lambda_{2s}^m
\big(\lambda_0\lambda_2\ldots\lambda_{2(s-1)}\big)^{\gamma}
\big(\|\mathcal{D}\|_0+(lM)^2\big)
\bigg(\frac{1}{(\lambda_il) (\lambda l)^{(s-1)/2}} + 
\sum_{r=1}^{jk}\frac{\lambda_r}{\lambda_i (\lambda l)^{s(r)/2} (\lambda l)^{(s-1)/2}}\bigg).
\end{split}
\end{equation*}
Recalling (\ref{count_lam}) and noting that $j(jk)\leq j(i)-1$, we check the following:
\begin{equation*}
\begin{split}
&\frac{1}{(\lambda_il) (\lambda l)^{(s-1)/2}} = \frac{1}{(\lambda l)^{(s-1)/2+1+j(i)}
(\lambda l)^{(s-1)/2}}\leq \frac{1}{(\lambda l)^s},
\\ & \sum_{r=1}^{jk}\frac{\lambda_r}{\lambda_i (\lambda l)^{s(r)/2} (\lambda l)^{(s-1)/2}}
= \sum_{r=1}^{jk}\frac{(\lambda l)^{1+j(r)+s(r)/2}}{(\lambda
  l)^{1+j(i)+s(i)/2}(\lambda l)^{s(r)/2}(\lambda l)^{(s-1)/2}} \\ &
\qquad\qquad\qquad \qquad\qquad \quad 
\leq C \frac{(\lambda l)^{j(jk)}}{(\lambda l)^{j(i)}(\lambda l)^{s-1}} \leq \frac{C}{(\lambda l)^s}.
\end{split}
\end{equation*}
Inserting the above into the previous estimate, we see that
the first term in the right hand side of (\ref{Vipom}) is bounded by: 
$$C \frac{\lambda_{2s}^m}{(\lambda l)^s}
\big(\lambda_0\lambda_2\ldots\lambda_{2(s-1)}\big)^{\gamma} 
\big(\|\mathcal{D}\|_0+(lM)^2\big).$$
On the other hand, for the second term in the right hand side of (\ref{Vipom}) we have:
\begin{equation*}
\begin{split}
& C\lambda_i^m\sum_{q+t=0\ldots m} 
\frac{1}{\lambda_i^{q+t+2}}\frac{\lambda_{2(s-1)}^{q+t+2}}{(\lambda l)^{s-1}}
\big(\lambda_0\lambda_2\ldots\lambda_{2(s-1)}\big)^\gamma\big(\|\mathcal{D}\|_0+ (lM)^2\big)
\\ & \qquad \leq C \frac{\lambda_{2s}^m}{(\lambda l)^s}
\big(\lambda_0\lambda_2\ldots\lambda_{2(s-1)}\big)^\gamma\big(\|\mathcal{D}\|_0+
(lM)^2\big),
\end{split}
\end{equation*}
because:
$$\frac{\lambda_{2(s-1)}}{\lambda_i} \leq
\frac{\lambda_{2(s-1)}}{\lambda_{2(s-1)+1}} = \frac{1}{(\lambda l)^{1/2}}.$$
This ends the proof of \ref{Fbound2} and the proof of our inductive estimates.

\medskip

{\bf 9. (End of proof)} 
We now show that \ref{Fbound1}-\ref{Fbound3} imply \ref{Abound1}-\ref{Abound3} .
First, taking $m=-1,0$ and in view of
(\ref{vw_fin}), \ref{pr_stima1} we conclude a preliminary version of \ref{Abound1}:
\begin{equation*}
\begin{split}
& \|\tilde v- v\|_1\leq \|v_0-v\|_1 + \sum_{i=1}^N \|v_i - v_{i-1}\|_1\leq C \big(\lambda_0\lambda_2\ldots
\lambda_N\big)^{\gamma/2}\big(\|\mathcal{D}\|_0^{1/2}+lM\big),\\
& \|\tilde w - w\|_1\leq \|w_0-w\|_1 + \sum_{i=1}^N \|w_i -
w_{i-1}\|_1 + \sum_{s=0}^{S-1}\|\Psi_s\|_1 \\ & \qquad\qquad \leq  C 
\big(\lambda_0\lambda_2\ldots
\lambda_N\big)^{\gamma}\big(\|\mathcal{D}\|_0^{1/2}+lM\big)\big(1+\|\mathcal{D}\|_0^{1/2}
+lM + \|\nabla v\|_0\big).
\end{split}
\end{equation*}
Taking $m=1$, by \ref{pr_stima3} and since
$1+j(N) = J$, we get a version of the first bound in \ref{Abound2}:
\begin{equation*}
\begin{split}
\|\nabla^2\tilde v\|_0 & = \|\nabla^2 v_N\|_0 \leq \|\nabla^2v_0\|_0 + \sum_{i=1}^N \|\nabla^2(v_i -
v_{i-1})\|_0 \\ & \leq C\Big( \frac{1}{l} +
\sum_{i=1}^N\frac{\lambda_i}{(\lambda l)^{s(i)/2}} \Big)
\big(\lambda_0\lambda_2\ldots \lambda_N\big)^{\gamma/2} \big(\|\mathcal{D}\|_0^{1/2}+lM\big)
\\ & \leq C\Big(\frac{1}{l} + \frac{(\lambda l)^{1+j(N)}}{l}\Big)
\big(\lambda_0\lambda_2\ldots \lambda_N\big)^{\gamma/2} \big(\|\mathcal{D}\|_0^{1/2}+lM\big)
\\ & = C \frac{(\lambda l)^J}{l} 
\big(\lambda_0\lambda_2\ldots \lambda_N\big)^{\gamma/2} \big(\|\mathcal{D}\|_0^{1/2}+lM\big).
\end{split}
\end{equation*}
Similarly, we also get the second bound, recalling again (\ref{count_lam}):
\begin{equation*}
\begin{split}
\|\nabla^2\tilde w\|_0& = \|\nabla^2w_N-\sum_{s=0}^{S-1}\nabla^2\Psi_s\|_0\leq \|\nabla^2w_0\|_0
+ \sum_{i=1}^N \|\nabla^2(w_i - w_{i-1})\|_0 + \sum_{s=0}^{S-1}\|\Psi_s\|_2 \\ & \leq C\Big( \frac{1}{l} +
\sum_{i=1}^N\frac{\lambda_i}{(\lambda l)^{s(i)/2}}
+\sum_{s=0}^{S-1}\frac{\lambda_{2s}}{(\lambda l)^s}\Big)
\big(\lambda_0\lambda_2\ldots \lambda_N\big)^{\gamma} \times \\ &
\qquad \times \big(\|\mathcal{D}\|_0^{1/2}+lM\big)
\big(1+ \|\mathcal{D}\|_0^{1/2}+lM + \|\nabla v\|_0\big) \\ &
\leq C \frac{(\lambda l)^J}{l} \big(\lambda_0\lambda_2\ldots
\lambda_N\big)^{\gamma} \big(\|\mathcal{D}\|_0^{1/2}+lM\big) 
\big(1+ \|\mathcal{D}\|_0^{1/2}+lM + \|\nabla v\|_0\big).
\end{split}
\end{equation*}
Finally, (\ref{pomag}), \ref{pr_stima2}, and \ref{Fbound2} applied with $m=0$ yield a
version of \ref{Abound3}:
\begin{equation*}
\begin{split}
\|\tilde{\mathcal{D}}\|_0 = & ~ \|(A-A_0) -\mathcal{D}_S\|_0 \leq \|A-A_0
\|_0 + \|\mathcal{D}_S\|_0 \\ & \leq  C\Big(l^\beta\|A\|_{0,\beta} +
\frac{1}{(\lambda l)^S} \big(\lambda_0\lambda_2\ldots
\lambda_{2(S-1)}\big)^\gamma \big(\|\mathcal{D}\|_0+ (lM)^2\big)\Big)
\end{split}
\end{equation*}
We conclude the final estimates by a straightforward calculation in:
$$\lambda_0\lambda_2\ldots \lambda_N = \frac{\displaystyle{\prod_{p=1}^{N/2}}(\lambda
l)^{1+j(2p)+(p-1)/2}}{l^{N/2+1}} = \left\{\begin{array}{ll}
\displaystyle{\frac{(\lambda l)^{(k^2+6k)/16}}{l^{k/2+1}} } & \mbox{
  for $k$ even},\vspace{3mm} \\
\displaystyle{\frac{(\lambda l)^{(k^2+5k+2)/4}}{l^{k+1}}} & \mbox{ for $k$ odd},
\end{array}\right. $$
which implies that:
$$\lambda_0\lambda_2\ldots \lambda_N \leq \frac{(\lambda
  l)^{(k^2+5k+2)/4}}{l^{k+1}}\leq \lambda^{(k^2+5k+2)/4}.$$
Thus, we achieve \ref{Abound1}-\ref{Abound3} with the auxiliary
exponent $\frac{k^2+5k+2}{4}\gamma$, rather than $\gamma$. These yield
the claimed bounds as well, by a simple re-parametrisation. The proof is done.
\endproof

\section{The Nash-Kuiper scheme in $\mathcal{C}^{1,\alpha}$: a proof
  of Theorem \ref{th_NashKuiHol}}\label{sec4} 

Before giving the proof, we note that  taking $S,J$ as in Theorem
\ref{thm_stage}, for which (\ref{lcm_defe}) implies:
$$\frac{S}{S+2J} = \frac{1}{1+4/k},$$
Theorem \ref{th_NashKuiHol} automatically yields the following result,
particular to dimension $d=2$:

\begin{corollary}\label{th_NKH}
Given an open, bounded, smooth domain $\omega\subset\R^2$, there
exists $l_0\in (0,1)$ such that the following holds for every $l\in (0,l_0)$.
For every $v\in\mathcal{C}^2(\bar\omega + \bar B_{2l}(0),\R^k)$,
$w\in\mathcal{C}^2(\bar\omega +\bar B_{2l}(0),\R^2)$
and $A\in\mathcal{C}^{0,\beta}(\bar\omega +\bar B_{2l}(0), \R^{2\times 2}_\sym)$, such that:
$$\mathcal{D}=A-\big(\frac{1}{2}(\nabla v)^T\nabla v + \sym\nabla
w\big) \quad\mbox{ satisfies } \quad 0<\|\mathcal{D}\|_0\leq 1,$$
and for every $\alpha$ in the range:
\begin{equation}\label{rangeAl_2}
0< \alpha <\min\Big\{\frac{\beta}{2},\frac{1}{1+4/k}\Big\},
\end{equation}
there exist $\tilde v\in\mathcal{C}^{1,\alpha}(\bar\omega,\R^k)$ and
$\tilde w\in\mathcal{C}^{1,\alpha}(\bar\omega,\R^2)$ with the following properties:
\begin{align*}
& \|\tilde v - v\|_1\leq C(1+\|\nabla v\|_0)^2\|\mathcal{D}\|_0^{1/4}, \quad \|\tilde w -
w\|_1\leq C (1+\|\nabla v\|_0)^3\|\mathcal{D}\|_0^{1/4},
\tag*{(\theequation)$_1$}\refstepcounter{equation} \label{Hbound1_2}\vspace{1mm}\\
& A-\big(\frac{1}{2}(\nabla \tilde v)^T\nabla \tilde v + \sym\nabla
\tilde w\big) =0 \quad\mbox{ in }\; \bar\omega. \tag*{(\theequation)$_2$}\label{Hbound2_2} 
\end{align*}
The norms in
the left hand side of \ref{Hbound1_2} are taken on $\bar\omega$, and in the right hand
side on $\bar\omega+ \bar B_{2l}(0)$. The constants $C$ depend only
on $\omega, k, A$ and $\alpha$. 
\end{corollary}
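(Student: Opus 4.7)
The plan is to recognize Corollary \ref{th_NKH} as the direct specialization of Theorem \ref{th_NashKuiHol} to dimension $d=2$, with the ``stage'' hypothesis supplied by Theorem \ref{thm_stage}. No new analytic content is required; the argument is purely parameter bookkeeping, and I do not expect any genuine obstacle beyond matching exponents and neighbourhood conventions.

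First I would match the $\alpha$-ranges. Theorem \ref{thm_stage} provides the stage result in $d=2$ with exponents $S,J$ satisfying (\ref{lcm_defe}), namely $\mathrm{lcm}(2,k)=2S=kJ$. From $J=2S/k$ one computes
\[
\frac{S}{S+2J}\;=\;\frac{S}{S+4S/k}\;=\;\frac{1}{1+4/k},
\]
so that the range (\ref{rangeAl}) in Theorem \ref{th_NashKuiHol} reduces to (\ref{rangeAl_2}) in the corollary. The other constraint $\alpha<\beta/2$ and the positivity assumption $0<\|\mathcal{D}\|_0\leq 1$ appear identically in both results.

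Second I would reconcile the two slightly different neighbourhood conventions: Theorem \ref{th_NashKuiHol} takes its input data on $\bar\omega+\bar B_{2l_0}(0)$ with $l_0$ fixed by Theorem \ref{thm_stage}, while Corollary \ref{th_NKH} permits the input data to live on $\bar\omega+\bar B_{2l}(0)$ for arbitrary $l\in(0,l_0)$. This is harmless, since Theorem \ref{thm_stage} is formulated uniformly over every scale in $(0,l_0)$, so the Nash--Kuiper iteration inside Theorem \ref{th_NashKuiHol} may be initialized at the given scale $l$ rather than at $l_0$; each subsequent stage still lies below the threshold from Lemma \ref{lem_diagonal}.

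Having matched the parameters and the neighbourhoods, I would simply invoke Theorem \ref{th_NashKuiHol} to produce $\tilde v\in\mathcal{C}^{1,\alpha}(\bar\omega,\R^k)$ and $\tilde w\in\mathcal{C}^{1,\alpha}(\bar\omega,\R^2)$. The bounds \ref{Hbound1}--\ref{Hbound2} specialize verbatim to \ref{Hbound1_2}--\ref{Hbound2_2}, and the constants in \ref{Hbound1_2} inherit the claimed dependence only on $\omega, k, A, \alpha$ directly from Theorem \ref{th_NashKuiHol}. The substantive work has already been carried out in Theorems \ref{thm_stage} and \ref{th_NashKuiHol}, so the corollary follows with no further effort.
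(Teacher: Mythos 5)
Your proposal is correct and matches the paper's own argument: the paper derives Corollary \ref{th_NKH} exactly by inserting the exponents $S,J$ from Theorem \ref{thm_stage}, noting that $2S=kJ$ gives $\frac{S}{S+2J}=\frac{1}{1+4/k}$, and then applying Theorem \ref{th_NashKuiHol}. Your additional remark reconciling the $\bar\omega+\bar B_{2l}(0)$ versus $\bar\omega+\bar B_{2l_0}(0)$ conventions is a sound (and slightly more careful) articulation of a point the paper leaves implicit.
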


\medskip

\noindent The remaining part of this section will be devoted to:

\bigskip

\noindent {\bf Proof of Theorem \ref{th_NashKuiHol}}

\smallskip

{\bf 1.} We set $v_0=v, w_0=w, \mathcal{D}_0=\mathcal{D}$.  
Then, for each $i\geq 1$ we will define:
$$ v_i\in\mathcal{C}^2(\bar\omega + \bar B_{l_i}(0),\R^k),  \quad
w_i\in \mathcal{C}^2(\bar\omega + \bar B_{l_i}(0),\R^d),\quad
\mathcal{D}_i=A-\big(\frac{1}{2} (\nabla v_i)^T\nabla v_i + \sym\nabla w_i\big),$$
by applying Theorem \ref{thm_stage} to $v_{i-1}$, $w_{i-1}$, $A$,
with specific parameters $\gamma, l_{i-1}, \lambda_{i-1}, M_{i-1}$.
To this end, we will define $\gamma\in (0,1)$, $\{l_i\}_{i=1}^\infty$, 
$\big\{\lambda_i, M_i\}_{i=0}^\infty $ satisfying, as below, the bounds
for all $i\geq 0$ and convergences as $i\to\infty$. We may also
decrease $l_0$ if needed. Namely, we will require:
\begin{equation}\label{propl}
\begin{split}
& l_{i+1}\leq \frac{l_{i}}{2},\quad  l_i\lambda_i>1,
\quad M_i\geq \max\{\|v_i\|_2,\|w_i\|_2,1\},\quad M_i\nearrow \infty,
\\ & \|\mathcal{D}_i\|_0\leq (l_iM_i)^2, \quad l_iM_i\to 0.
\end{split}
\end{equation}
From \ref{Abound1}-\ref{Abound3} we then get for all $i\geq 0$:
\begin{equation}\label{Abd1}
\begin{split}
& \|v_{i+1} - v_i\|_1\leq C\lambda_i^{\gamma} l_iM_i, \qquad
\|w_{i+1} - w_i\|_1\leq C\lambda_i^{\gamma} l_iM_i \big(1+l_iM_i+\|\nabla v_i\|_0\big), \\
& \| v_{i+1}\|_2\leq C (\lambda_i l_i)^J\lambda_i^{\gamma} M_i,\qquad
\|w_{i+1}\|_2\leq C (\lambda_i l_i)^J\lambda_i^{\gamma}M_i \big(1+l_iM_i+\|\nabla v_i\|_0\big), \\ 
& \|{\mathcal{D}_{i+1}}\|_0\leq C\Big(l_i^\beta{\|A\|_{0,\beta}}
+\frac{1}{(\lambda_i l_i)^S} \lambda_i^{\gamma} (l_iM_i)^2\Big).
\end{split}
\end{equation}
The above bound on $\|v_{i+1}\|_2$ follows by:
\begin{equation*}
\begin{split}
\| v_{i+1}\|_2&\leq \|\nabla^2 v_{i+1}\|_0+\|v_{i+1}-v_i\|_1 +
\|v_i\|_1\\ & \leq C (\lambda_i l_i)^J\lambda^{\gamma} M_i +
C\lambda_i^{\gamma} l_iM_i+M_i\leq C (\lambda_i l_i)^J\lambda^{\gamma}M_i,
\end{split}
\end{equation*}
in view of \ref{Abound2}, \ref{Abound1} and (\ref{propl}). The bound on  $\| w_{i+1}\|_2$ is obtained similarly.
We also recall our convention that all constants denoted by $C$ are bigger
than $1$ and may change from line to line, but depend only on
$\omega$, $k$, $\gamma$ (and $S,J\geq 1$ in the present proof). 

\medskip

{\bf 2.} To show the validity of (\ref{propl}), we make the following ansatz:
\begin{equation}\label{lal}
\lambda_i = \frac{b}{l_i^a} \quad \mbox{ with some }\; a\in (1,2)
\;\mbox{ and } \;b>1.
\end{equation}
Anticipating the details of the proof, it is convenient to keep in
mind that we assign: sufficiently large $b$, $l_0$ small and
$(a-1)$ small, and $\gamma$ small. The requirements in (\ref{propl}) are then implied by:
\begin{equation}\label{propl2}
\begin{split}
& l_{i+1}\leq \frac{l_{i}}{2}, \quad
M_i\geq \max\{\|v_i\|_2,\|w_i\|_2,1\},\quad M_i\nearrow \infty,
\\ &  b^\gamma\sum_{i=0}^\infty l_i^{1-a\gamma} M_i\leq
C\frac{b^{(S+2J)\gamma}}{l_0^{2a\gamma}} \big(1+\|\nabla v_0\|_0\big)\|\mathcal{D}_0\|_0^{1/2}, 
\\ & \|\mathcal{D}_i\|_0\leq (l_iM_i)^2, 
\end{split}
\end{equation}
whereas the bounds in (\ref{Abd1}) may be rewritten as:
\begin{equation}\label{Abd2}
\begin{split}
& \|v_{i+1} - v_i\|_1\leq C b^\gamma l_i^{1-a\gamma} M_i, \qquad
\|w_{i+1} - w_i\|_1\leq Cb^\gamma l_i^{1-a\gamma} \frac{b^{(S+2J)\gamma}}{l_0^{2a\gamma}} M_i \big(1+\|\nabla
v_0\|_0\big), \\
& \|v_{i+1}\|_2\leq C \frac{b^{J+\gamma}}{l_i^{(a-1)J + a\gamma}} M_i,\qquad
\|w_{i+1}\|_2\leq C \frac{b^{J+\gamma}}{l_i^{(a-1)J +a\gamma}} 
\frac{b^{(S+2J)\gamma}}{l_0^{2a\gamma}} M_i \big(1+\|\nabla v_0\|_0\big),  \\
& \|{\mathcal{D}_{i+1}}\|_0\leq C\Big(l_i^\beta{\|A\|_{0,\beta}}
+\frac{l_i^{2+(a-1)S-a\gamma}}{b^{S-\gamma}} M_i^2\Big)
\end{split}
\end{equation}
The middle two bounds above imply that:
$$\max\{\|v_{i+1}\|_2,\|w_{i+1}\|_2,1\} \leq C 
\frac{b^{J+(S+2J+1)\gamma}}{l_i^{(a-1)J+3\gamma a}} M_i \big(1+\|\nabla v_0\|_0\big).$$
Consequently, and splitting the last bound in (\ref{Abd2}) between the
two terms in its right hand side, we
see that the requirements in (\ref{propl2}) are implied by the satisfaction of:
\begin{equation}\label{propl3}
\begin{split}
& l_{i+1}\leq \frac{l_{i}}{2}, \quad \|\mathcal{D}_0\|_0 = (l_0M_0)^2,
\\ &  b^\gamma\sum_{i=0}^\infty l_i^{1-a\gamma} M_i\leq
C\frac{b^{(S+2J)\gamma}}{l_0^{2a\gamma}} \big(1+\|\nabla v_0\|_0\big)\|\mathcal{D}_0\|_0^{1/2}, 
\\ & \Big(\frac{M_{i+1}}{M_i}\Big)^2\geq \max\Big\{
\frac{2C}{b^{S-\gamma}}\frac{l_i^{2+(a-1)S-a\gamma}}{l_{i+1}^2},
\frac{C b^{2J+2(S+2J+1)\gamma}}{l_i^{2(a-1)J+6\gamma a}}\Big\}\cdot \big(1+\|\nabla v_0\|_0\big)^2,\\ 
& M_{i+1}^2\geq \frac{2C l_i^\beta}{l_{i+1}^2}\|A\|_{0,\beta}.
\end{split}
\end{equation}
In the right hand side of second line estimate above, its first term prevails provided that:
$$l_{i+1}^2 \leq \frac{l_i^{2+(a-1)(S+2J)5+a\gamma}}{C b^{S+2J+(2S+4J+1)\gamma} }$$
Consequently, we define:
\begin{equation}\label{lal2}
\begin{split}
 l_{i}= B^{\frac{q^i-1}{q-1}} l_0^{q^i}
\quad & \mbox{ where }\quad \frac{1}{B}= Cb^{\frac{S}{2} +J+(S+2J+\frac{1}{2})\gamma}
\\ & \mbox{ and } \quad q = 1+(a-1) \big(\frac{S}{2}+J\big) +\frac{5a\gamma}{2}
\end{split}
\end{equation}
and note that the estimates in (\ref{propl3}) are then guaranteed by:
\begin{align*}
& b^\gamma\sum_{i=0}^\infty B^{\frac{(q^i-1)(1-a\gamma)}{q-1}}l_0^{q^i(1-a\gamma)} M_i\leq
C\frac{b^{(S+2J)\gamma}}{l_0^{2a\gamma}} \big(1+\|\nabla v_0\|_0\big)\|\mathcal{D}_0\|_0^{1/2}, 
\tag*{(\theequation)$_1$}\refstepcounter{equation} \label{propl4_1}\medskip 
\\ & \frac{M_{i+1}^2}{M_i^2}\geq 
\frac{2C}{b^{S-\gamma}}\frac{1}{B^{\frac{(a-1)S-a\gamma}{q-1}}}
\frac{1}{\big(B^{\frac{1}{q-1}}l_0\big)^{q^i(2J(a-1)+ga\gamma)}}\big(1+\|\nabla v_0\|_0\big),
\tag*{(\theequation)$_2$}\label{propl4_2}\medskip 
\\ & M_{i+1}^2\geq 2 C
\|A\|_{0,\beta}\frac{B^{\frac{2-\beta}{q-1}}}{\big(B^{\frac{1}{q-1}}l_0\big)^{q^i(2q-\beta)}}.
\tag*{(\theequation)$_3$}\label{propl4_3}\medskip 
\end{align*}
We will assume the initial normalisation:
$$\|\mathcal{D}_0\|_0 = (l_0M_0)^2,$$
and show \ref{propl4_1}-\ref{propl4_3} for all $i\geq 0$, 
by separating our construction into two cases below.

\medskip

{\bf 3.} We start by observing that condition \ref{propl4_2} holds, if we set:
\begin{equation}\label{def2}
M_{i+1}^2 = M_0^2 \Big(\frac{2C}{b^{S-\gamma}} \frac{(1+\|\nabla v_0\|_0)^2}{B^{\frac{S(a-1)-a\gamma}{q-1}}}\Big)^{i+1}
\big(B^{\frac{1}{q-1}}l_0\big)^{\frac{2J(a-1)+6a\gamma}{q-1}} 
\frac{1}{\big(B^{\frac{1}{q-1}}l_0\big)^{q^{i+1} \frac{2J(a-1)+6a\gamma}{q-1}}},
\end{equation}
for all $i\geq 0$. On the other hand, \ref{propl4_3} follows directly, by assigning:
\begin{equation}\label{def1}
M_{i+1}^2 = \big(M_0 (1+\|\nabla v_0\|_0)\big)^{2(i+1)}l_0^{2-\beta}
\frac{B^{\frac{2-\beta}{q-1}}}{\big(B^{\frac{1}{q-1}}l_0\big)^{q^{i} (2q -\beta)}},
\end{equation}
and taking $M_0^2l_0^{2-\beta}\geq 8C \|A\|_{0,\beta}$, which is
guaranteed by assigning $l_0$ small enough to have: 
\begin{equation}\label{reqi3}
2C \|A\|_{0,\beta} l_0^\beta\leq \|\mathcal{D}_0\|_0.
\end{equation}

\medskip

\noindent We now choose the larger one of definitions (\ref{def2}), (\ref{def1}), asymptotically as
$i\to \infty$, which in view of $l_0, B<1$ reduces to choosing the
larger exponent in the power of $\frac{1}{B^{\frac{1}{q-1}}l_0}$. There holds:
\begin{equation*}
\begin{split}
\displaystyle \frac{\beta}{2} >\frac{S}{S+2J} 
\implies 2q-\beta & <2\frac{J+\frac{3a}{a-1}\gamma}{\big(\frac{S}{2}+J\big)
  +\frac{5a}{2(a-1)}\gamma }\\ & =\frac{2}{q-1} \big(J(a-1)+3a\gamma\big)<
\frac{2q}{q-1} \big(J(a-1)+3a\gamma\big),
\end{split}
\end{equation*}
provided that $a-1$ small and $\gamma$ small.
In that case we proceed with (\ref{def2}). Further:
$$\displaystyle \frac{\beta}{2} \leq \frac{S}{S+2J} 
\implies \frac{\beta}{2} < q\frac{S-\frac{a}{a-1}\gamma}{\big(S+2J\big)+ \frac{5a}{a-1}\gamma}
\implies 2q-\beta >\frac{2q}{q-1} \big(J(a-1)+3a\gamma\big),$$
by the same order of assigning $a$ and then a small $\gamma$. In that case we
will adopt (\ref{def1}).

\medskip

{\bf 4.} {\bf (Case $\mathbf{\displaystyle \frac{\pmb{\beta}}{2} >
    \frac{S}{S+2J}}$, definition (\ref{def2}))} Below, we show
that \ref{propl4_1} and \ref{propl4_3} may be achieved by assigning
$b, l_0, a, \gamma$  appropriately. We first consider the bound \ref{propl4_3}, which is implied by
the following estimate:
$$M_0^2 \bigg(\frac{2C}{b^{S-\gamma}} \frac{(1+\|\nabla v_0\|_0)^2}{B^{\frac{S(a-1)-a\gamma}{q-1}}}\bigg)^{i+1}
\frac{\big(B^{\frac{1}{q-1}}l_0\big)^{\frac{2J(a-1)+6a\gamma}{q-1}}}{
\big(B^{\frac{1}{q-1}}l_0\big)^{q^i\big(\frac{2J(a-1)+6a\gamma}{q-1}-(2q-\beta)\big)}}
\geq 2C \|A\|_{0,\beta}B^{\frac{2-\beta}{q-1}}.$$
Multiplying both sides by $l_0^2 \big(B^{\frac{1}{q-1}}l_0\big)^{\beta-2q}$ and recalling 
$M_0^2l_0^2=\|\mathcal{D}_0\|_0$, we equivalently write:
$$ \bigg(\frac{2C}{b^{S-\gamma}} \frac{(1+\|\nabla v_0\|_0)^2}{B^{\frac{S(a-1)-a\gamma}{q-1}}}\bigg)^{i+1}
\frac{1}{\big(B^{\frac{1}{q-1}}l_0\big)^{(q^i-1)\big(\frac{2J(a-1)+6a\gamma}{q-1}-(2q-\beta)\big)}}
\geq \frac{2C \|A\|_{0,\beta}}{\|\mathcal{D}_0\|_0}B^{-2}l_0^{\beta-2(q-1)}.$$
Since $q^i-1\geq (q-1)i$, the above is implied by:
$$ \bigg(\frac{2C}{b^{S-\gamma}} \frac{1}{B^{\frac{S(a-1)-a\gamma}{q-1}}}\bigg)^{i+1}
\bigg(\frac{1}{B^{\frac{2J(a-1)+6a\gamma}{q-1}-(2q-\beta)}}\bigg)^i
\geq \frac{2C \|A\|_{0,\beta}}{\|\mathcal{D}_0\|_0}B^{-2}l_0^{\beta-2(q-1)},$$
and further by:
\begin{equation}\label{mumill} 
\frac{1}{b^{S-\gamma} B^{\frac{S(a-1)-a\gamma}{q-1}}}\frac{B^2}{l_0^{\beta-2(q-1)}}
\bigg(\frac{2C}{b^{S-\gamma}B^{\beta-2(q-1)}}\bigg)^{i}
\geq \frac{2C \|A\|_{0,\beta}}{\|\mathcal{D}_0\|_0}.
\end{equation}
We now observe that the base power quantity in the left hand side above can be written as:
$$\frac{2C}{b^{S-\gamma}B^{\beta-2(q-1)}}= C b^{\big(\frac{S}{2}+J +
  (S+2J+\frac{1}{2})\gamma\big)(\beta-2(q-1))-S+\gamma}\geq 1,$$
as the exponent there is positive, by the first
implication in step 3. Thus, (\ref{mumill}) follows from:
$$\frac{1}{b^{S-\gamma +(S+2J+(2S+4J+1)\gamma)\frac{2J+\frac{6a}{a-1}\gamma}{S+2J
    +\frac{5a}{a-1}\gamma}}}\frac{1}{l_0^{\beta-2(q-1)}}
\geq \frac{2C \|A\|_{0,\beta}}{\|\mathcal{D}_0\|_0},$$
implied, if only $a-1$ and $\gamma$ are small, by: 
\begin{equation}\label{reqi1} 
\frac{1}{b^{S+4J}}\frac{1}{l_0^{\beta-2(q-1)}}\geq\frac{2C \|A\|_{0,\beta}}{\|\mathcal{D}_0\|_0}.
\end{equation}
We will show the validity of this and other requirements in step 6 below.

\medskip

\noindent We now consider the estimate in \ref{propl4_1}, namely:
\begin{equation*}
\begin{split}
b^\gamma \sum_{i=0}^\infty B^{\frac{(q^{i}-1)(1-a\gamma)}{q-1}}& l_0^{q^{i}(1-a\gamma)}M_0
\Big(\frac{2C}{b^{S-\gamma}}\frac{(1+\|\nabla v_0\|_0)^2}{B^{\frac{S(a-1)-a\gamma}{q-1}}}\Big)^{i/2}\frac{1}
{\big(B^{\frac{1}{q-1}}l_0\big)^{(q^i-1)\frac{J(a-1)+3a\gamma}{q-1}}}
\\ & \leq C\frac{b^{(S+2J)\gamma}}{l_0^{2a\gamma}}\big(1+\|\nabla v_0\|_0\big)^2\|\mathcal{D}_0\|_0^{1/2}.
\end{split}
\end{equation*}
In view of $M_0^2l_0^2=\|\mathcal{D}_0\|_0$, the left hand side above can be
equivalently written and estimated:
\begin{equation*}
\begin{split} 
b^\gamma \frac{\|\mathcal{D}_0\|_0^{1/2}}{l_0^{a\gamma}}&\sum_{i=0}^\infty
\Big(\frac{2C}{b^{S-\gamma}}\frac{(1+\|\nabla v_0\|_0)^2}{B^{\frac{S(a-1)-a\gamma}{q-1}}}\Big)^{i/2}\frac{1}
{\big(B^{\frac{1}{q-1}}l_0\big)^{(q^i-1)\big(\frac{J(a-1)+3a\gamma}{q-1}-1+a\gamma\big)}}\\
& \leq {b^\gamma}\frac{\|\mathcal{D}_0\|_0^{1/2}}{l_0^{a\gamma}}\sum_{i=0}^\infty
\big(C (1+\|\nabla v_0\|_0)\big)^i
\big(B^{\frac{1}{q-1}}l_0\big)^{(q^i-1)\big(\frac{S-\frac{a}{a-1}\gamma}{S+2J+\frac{5a}{a-1}\gamma}-a\gamma\big)}
\\ & \leq \|\mathcal{D}_0\|_0^{1/2}\Big(\frac{b}{l_0^a}\Big)^\gamma\sum_{i=0}^\infty
\Big(C(1+\|\nabla v_0\|_0) B^{\frac{S-\frac{a}{a-1}\gamma}{S+2J+\frac{5a}{a-1}\gamma}-a\gamma} \Big)^i
\\ & \leq \|\mathcal{D}_0\|_0^{1/2}\Big(\frac{b}{l_0^a}\Big)^\gamma\sum_{i=0}^\infty
\Bigg(\frac{C(1+\|\nabla v_0\|_0)}{
b^{\big(\frac{S}{2}+J+(S+2J+\frac{1}{2})\gamma)\big)\big(\frac{S-\frac{a}{a-1}\gamma}{S+2J+\frac{5a}{a-1}\gamma}-a\gamma\big)}
}\Bigg)^i,
\end{split}
\end{equation*}
where we again used $q^i-1\geq (q-1)i$ for all $i\geq 0$. The bound in
\ref{propl4_1} will follow, in particular, by assuring that the series
in the right hand side above sums to less than $2$, through:
\begin{equation}\label{reqi2}
2C(1+\|\nabla v_0\|_0)\leq
b^{\big(\frac{S}{2}+J+(S+2J+\frac{1}{2})\gamma\big)\big(\frac{S-\frac{a}{a-1}\gamma}{S+2J+\frac{5a}{a-1}\gamma}-a\gamma\big)} 
\end{equation}

\medskip

{\bf 5.} {\bf (Case $\mathbf{\displaystyle \frac{\pmb{\beta}}{2} \leq
    \frac{S}{S+2J}}$, definition (\ref{def1}))}
In this second case, we show that \ref{propl4_1} and \ref{propl4_2}
are valid with appropriate $b, l_0, a,\gamma$.
We first consider \ref{propl4_2} at $i=0$, which is:
\begin{equation*}
\begin{split}
\frac{1}{B^2} \frac{(1+\|\nabla v_0\|_0)^2}{l_0^{(S+2J)(a-1) +5a\gamma}}
=\frac{1 (1+\|\nabla v_0\|_0)^2}{(B^{\frac{1}{q-1}}l_0)^{2(q-1)}}= \frac{M_1^2}{M_0^2} & \geq 
\frac{2C}{b^{S-\gamma}}\frac{1}{B^{\frac{S(a-1)-a\gamma}{q-1}}}
\frac{(1+\|\nabla v_0\|_0)^2}{\big(B^{\frac{1}{q-1}}l_0\big)^{2J(a-1)+6a\gamma}} \\ & = 
\frac{2C}{b^{S-\gamma}}\frac{1}{B^2} \frac{(1+\|\nabla v_0\|_0)^2}{l_0^{2J(a-1)+6a\gamma}}. 
\end{split}
\end{equation*}
Thus, for the validity of the above requirement, we necessitate:
\begin{equation}\label{reqi4}
2C l_0^{S(a-1)-a\gamma}\leq b^{S-\gamma},
\end{equation}
which is achieved with $b$ large. To complete the analysis of
\ref{propl4_2}, we will show for all $i\geq 0$:
$$ \frac{M_0^2}{\big(B^{\frac{1}{q-1}}l_0\big)^{q^i(q-1)(2q-\beta)}}\geq 
\frac{2C}{b^{S-\gamma}}\frac{1}{B^{\frac{S(a-1)-a\gamma}{q-1}}}
\frac{1}{\big(B^{\frac{1}{q-1}}l_0\big)^{q^i(2J(a-1)+6a\gamma)}},$$
which is equivalent to:
\begin{equation}\label{dars2}
M_0\geq \frac{2C}{b^{S-\gamma}}\frac{1}{B^{\frac{S(a-1)-a\gamma}{q-1}}}
\frac{1}{\big(B^{\frac{1}{q-1}}l_0\big)^{q^i\big(2J(a-1)+6a\gamma-(q-1)(2q-\beta)\big)}}.
\end{equation}
By the second implication in step 3, we note the sign of the exponent:
$$2J(a-1)+6a\gamma-(q-1)(2q-\beta)<\big(2J(a-1)+3a\gamma\big)(1-q)<0$$
Hence, and recalling that $(l_0M_0)^2=\|\mathcal{D}_0\|_0$, it follows
that (\ref{dars2}) is implied by:
$$ \frac{\|\mathcal{D}_0\|_0^{1/2}}{l_0} \geq  \frac{2C}{b^{S-\gamma}}\frac{1}{B^{\frac{S(a-1)-a\gamma}{q-1}}}
= \frac{2C}{b^{S-\gamma-(S+2J+(2S+4J+1)\gamma)
\frac{S-\frac{a}{a-1}\gamma}{S+2J+\frac{5a}{a-1}\gamma}}}.$$
Since the power in the exponent above is positive, we see that it is
enough to assure that:
\begin{equation}\label{reqi5}
2C l_0\leq |\mathcal{D}_0\|_0^{1/2}.
\end{equation}
We will show the validity of this and other requirements in step 7 below.

\medskip

\noindent We now validate the estimate in \ref{propl4_1}, namely:
\begin{equation*}
\begin{split}
b^\gamma\bigg( l_0^{1-a\gamma} M_0 + \sum_{i=0}^\infty &B^{\frac{(q^{i+1}-1)(1-a\gamma)}{q-1}} 
l_0^{q^{i+1}(1-a\gamma)}\Big(M_0(1+\|\nabla v_0\|_0)\Big)^{i+1}l_0^{1-\beta/2}\frac{B^{\frac{1-\beta/2}{q-1}}}
{\big(B^{\frac{1}{q-1}}l_0\big)^{q^i(q-\beta/2)}}\bigg)\\ & \leq C
\frac{b^{(2+2J)\gamma}}{l_0^{2a\gamma}}(1+\|\nabla v_0\|_0)\|\mathcal{D}_0\|_0^{1/2}.
\end{split}
\end{equation*}
The left hand side above may be rewritten and estimated by:
\begin{equation*}
\begin{split}
b^\gamma &\bigg( \frac{\|\mathcal{D}_0\|_0^{1/2}}{l_0^{a\gamma}} +
\sum_{i=0}^\infty
\big(B^{\frac{1}{q-1}}l_0\big)^{q^i(\beta/2-aq\gamma)}\frac{M_0^{i+1}(1+\|\nabla v_0\|_0)^{i+1}l_0^{1-\beta/2}}
{B^{\frac{\beta/2-a\gamma}{q-1}}}\bigg) \\ & \leq 
b^\gamma \bigg( \frac{\|\mathcal{D}_0\|_0^{1/2}}{l_0^{a\gamma}} +
M_0 (1+\|\nabla v_0\|_0)\big(B^{\frac{1}{q-1}}l_0\big)^{\beta/2-aq\gamma}
\frac{l_0^{1-\beta/2}}{B^{\frac{\beta/2-aq\gamma}{q-1}}}\times \\ &
\qquad\qquad\qquad\qquad
\times \sum_{i=0}^\infty \Big(\big(B^{\frac{1}{q-1}}l_0\big)^{(q-1)(\beta/2-a\gamma)}M_0 (1+\|\nabla v_0\|_0)\Big)^i\bigg) 
\\ & \leq \frac{b^\gamma\|\mathcal{D}_0\|_0^{1/2}}{l_0^{aq\gamma}} \bigg(
1 + \frac{1+\|\nabla v_0\|_0}{B^{a\gamma}}\sum_{i=0}^\infty
\Big(B^{\beta/2-aq\gamma}\frac{(1+\|\nabla v_0\|_0)\|\mathcal{D}_0\|_0^{1/2}}{l_0}\Big)^i\bigg)
\\ & \leq C\frac{b^{\gamma + (\frac{S}{2}+J+(S+2J+\frac{1}{2})\gamma)a\gamma}}
{l_0^{aq\gamma}}(1+\|\nabla v_0\|_0)\|\mathcal{D}_0\|_0^{1/2}
\bigg( 1 + \sum_{i=0}^\infty\Big(\frac{1+\|\nabla
  v_0\|_0}{l_0 b^{\big(\frac{S}{2}+J+ (S+2J+\frac{1}{2})\gamma\big)(\frac{\beta}{2}-a\gamma)}}\Big)^i\bigg)
\\ & \leq C\frac{b^{\gamma + (\frac{S}{2}+J+(S+2J+\frac{1}{2})\gamma)a\gamma}}
{l_0^{aq\gamma}}(1+\|\nabla v_0\|_0)\|\mathcal{D}_0\|_0^{1/2},
\end{split}
\end{equation*}
where we used the fact that $q^i\geq (q-1)i+1$ for all $i\geq 0$ and
the requirement that the ratio in the geometric series above is less that $\frac{1}{2}$, implied by:
\begin{equation} \label{reqi6}
2 (1+\|\nabla v_0\|_0)\leq l_0 b^{S\beta/{6}},
\end{equation}
which we note automatically implies (\ref{reqi4}).

\medskip

{\bf 6.} {\bf (Case $\mathbf{\displaystyle \frac{\pmb{\beta}}{2} >
    \frac{S}{S+2J}}$, viability of assumptions in step 4 and
  ${\pmb{\mathcal{C}}}^{\mathbf{1}}$ convergence)} 
We now examine (\ref{reqi1}), (\ref{reqi2}). Under the usual
assumption $a-1, \gamma\ll 1$, these are implied by:
\begin{equation}\label{faras}
b^{S/4}\geq C(1+\|\nabla v_0\|_0), \qquad l_0^\beta\leq
\frac{\|\mathcal{D}_0\|_0}{C\|A\|_{0,\beta} b^{S+4J}}.
\end{equation}
Hence we define:
$$ b^{S/4}= C(1+\|\nabla v_0\|_0), \qquad l_0^\beta=
\frac{\|\mathcal{D}_0\|_0}{C\|A\|_{0,\beta} b^{S+4J}}=
\frac{\|\mathcal{D}_0\|_0}{C\|A\|_{0,\beta} (1+\|\nabla v_0\|_0)^{\frac{4}{S}(S+4J)}}.$$
Consequently, the right hand side of the bound in \ref{propl4_1}
becomes, if only $\gamma\ll 1$:
\begin{equation*}
\begin{split}
C\frac{b^{(S+2J)\gamma}}{l_0^{2a\gamma}} &\big(1+\|\nabla v_0\|_0\big)\|\mathcal{D}_0\|_0^{1/2}
\\ & = C\bigg(\frac{(1+\|\nabla v_0\|_0)^{\frac{4}{S}\big(S+2J+\frac{2a}{\beta}(S+4J)\big)} 
\|A\|_{0,\beta}^{2a/\beta}} {\|\mathcal{D}_0\|_0^{2a/\beta}}\bigg)^\gamma  
\big(1+\|\nabla v_0\|_0\big)\|\mathcal{D}_0\|_0^{1/2}
\\ & \leq C \big(1+\|A\|_{0,\beta}\big)^{2a\gamma/\beta}\big(1+\|\nabla v_0\|_0\big)^2
\|\mathcal{D}_0\|_0^{1/2 - 2a\gamma/\beta}
\end{split}
\end{equation*} 
and we likewise observe that:
\begin{equation*}
\begin{split}
C\bigg(\frac{b^{(S+2J)\gamma}}{l_0^{2a\gamma}} &\big(1+\|\nabla v_0\|_0\big)\bigg)^2\|\mathcal{D}_0\|_0^{1/2}
\leq C \big(1+\|A\|_{0,\beta}\big)^{4a\gamma/\beta}\big(1+\|\nabla v_0\|_0\big)^3
\|\mathcal{D}_0\|_0^{1/2 - 4a\gamma/\beta}.
\end{split}
\end{equation*} 
In particular, if $\gamma\leq\frac{\beta}{32}$ so that
$\frac{4a\gamma}{\beta}\leq \frac{1}{4}$, the exponents on the deficits
are greater than $\frac{1}{4}$ and:
$$\big(1+\|A\|_{0,\beta}\big)^{4a\gamma/\beta}\leq  \big(1+\|A\|_{0,\beta}\big)^{1/4},\qquad
\|\mathcal{D}_0\|_0^{1/2 - 4a\gamma/\beta}\leq  \|\mathcal{D}_0\|_0^{1/4}.$$
Recalling (\ref{Abd2}) it now follows that:
\begin{equation}\label{kalb}
\begin{split}
& \sum_{i=0}^{\infty}\|v_{i+1}-v_i\|_1\leq  
C \big(1+\|A\|_{0,\beta}\big)^{1/8}\big(1+\|\nabla v_0\|_0\big)^2 \|\mathcal{D}_0\|_0^{3/8},\\ 
& \sum_{i=0}^{\infty}\|w_{i+1}-w_i\|_1\leq C
\big(1+\|A\|_{0,\beta}\big)^{1/4}\big(1+\|\nabla v_0\|_0\big)^3
\|\mathcal{D}_0\|_0^{1/4}, 
\end{split}
\end{equation}
hence the sequences $\{v_i\}_{i=0}^\infty$, $\{w_i\}_{i=0}^\infty$ converge
in $\mathcal{C}^1(\bar\omega)$ to some limiting fields
$\tilde v\in\mathcal{C}^1(\bar\omega,\R^k)$, $\tilde
w\in\mathcal{C}^1(\bar\omega,\R^d)$ that satisfy \ref{Hbound1}.
The validity of \ref{Hbound2} is clear by the last assertion in (\ref{propl}).

\medskip

{\bf 7.} {\bf (Case $\mathbf{\displaystyle \frac{\pmb{\beta}}{2} \leq
    \frac{S}{S+2J}}$, viability of assumptions in step 5  and
  ${\pmb{\mathcal{C}}}^{\mathbf{1}}$ convergence)} 
We now examine (\ref{reqi3}), (\ref{reqi5}), (\ref{reqi6}). Under the usual
assumption $a-1, \gamma\ll 1$, these follow from:
\begin{equation}\label{faras2}
C\big(1+\|A\|_{0,\beta}\big)l_0^\beta\leq \|\mathcal{D}_0\|_0,
\qquad 2(1+\|\nabla v_0\|_0) \leq l_0b^{S\beta/6}.
\end{equation}
Hence we define:
$$l_0^\beta=\frac{\|\mathcal{D}_0\|_0}{C\big(1+\|A\|_{0,\beta}\big)},
\qquad b^{S\beta/6}= \frac{2(1+\|\nabla v_0\|_0)}{l_0}= 
\frac{C(1+\|\nabla v_0\|_0)\big(1+\|A\|_{0,\beta}\big)^{1/\beta}}{\|\mathcal{D}_0\|_0^{1/\beta}}$$
Consequently, the right hand side of the bound in \ref{propl4_1}
becomes, for $\gamma\ll 1$:
\begin{equation*}
\begin{split}
C\frac{b^{(S+2J)\gamma}}{l_0^{2a\gamma}} &\big(1+\|\nabla v_0\|_0\big)\|\mathcal{D}_0\|_0^{1/2}
\\ & = C\bigg(\frac{(1+\|\nabla v_0\|_0)^{\frac{6}{S\beta}(S+2J)} 
\big(1+\|A\|_{0,\beta}\big)^{\frac{6}{S\beta^2}(S+2J)+\frac{2a}{\beta}}}
{\|\mathcal{D}_0\|_0^{\frac{6}{S\beta^2}(S+2J)+\frac{2a}{\beta}}}\bigg)^\gamma  
\big(1+\|\nabla v_0\|_0\big)\|\mathcal{D}_0\|_0^{1/2}
\\ & \leq C
\big(1+\|A\|_{0,\beta}\big)^{\big(\frac{6}{S\beta^2}(S+2J)+\frac{2a}{\beta}\big)\gamma}\big(1+\|\nabla
v_0\|_0\big)^2 \|\mathcal{D}_0\|_0^{1/2 - \big(\frac{6}{S\beta^2}(S+2J)+\frac{2a}{\beta}\big)\gamma}
\end{split}
\end{equation*} 
As in the previous step, taking $\gamma$ small enough to have
$\big(\frac{6}{S\beta^2}(S+2J)+\frac{2a}{\beta}\big)\gamma\leq\frac{1}{8}$,
the above is further estimated by:
$$C \big(1+\|A\|_{0,\beta}\big)^{1/8}\big(1+\|\nabla v_0\|_0\big)^2 \|\mathcal{D}_0\|_0^{3/8},$$
eventually leading to (\ref{kalb}), as well as the same
convergences conclusion with \ref{Hbound1}, \ref{Hbound2}. 

\medskip

{\bf 8.} {\bf (Convergence in $\mathbf{\pmb{\mathcal{C}}^{1,\pmb{\alpha}}}$)} 
To conclude the proof, it remains to show the improved regularity of the limiting fields, namely: $\tilde
v\in\mathcal{C}^{1,\alpha}(\bar\omega,\R^k)$, $\tilde
w\in\mathcal{C}^{1,\alpha}(\bar\omega, \R^d)$. 
The interpolation inequality $\|\cdot\|_{1,\alpha} \leq
C\|\cdot\|_{1}^{1-\alpha} \|\cdot\|_{2}^{\alpha}$ and (\ref{Abd2}),
imply that for all $i\geq 0$ there holds on $\bar\omega$:
\begin{equation}\label{qitta}
\begin{split}
\|v_{i+2} - &v_{i+1}\|_{1,\alpha} + \|w_{i+2} - w_{i+1}\|_{1,\alpha}
\\ &  \leq C b^{\alpha J + \gamma} l_{i+1}^{1-\alpha -\alpha J (a-1) -a\gamma}
 M_{i+1} \frac{b^{(S+2J)\gamma}}{l_0^{2a\gamma}}\big(1+\|\nabla v_0\|_0\big)
\\ & = C \frac{b^{\alpha J + (S+2J+1)\gamma}}{l_0^{2a\gamma}}
\Big(B^{\frac{q^{i+1}-1}{q-1}} l_0^{q^{i+1}}\Big)^{1-\alpha - \alpha J (a-1) -a\gamma}
M_{i+1} \big(1+\|\nabla v_0\|_0\big)
\end{split}
\end{equation}
We will argue that the sequences $\{v_i\}_{i=0}^\infty$, $\{w_i\}_{i=0}^\infty$
are Cauchy in $\mathcal{C}^{1,\alpha}(\bar\omega)$, by comparing the right hand
side above with terms of a converging power series.

\medskip

\noindent Recall that the first case in step 4 is determined by
$\frac{\beta}{2}>\frac{S}{S+2J}$, which directly implies that:
$$0<\alpha<\frac{S}{S+2J}.$$ 
There, the quantities $M_{i+1}$ are given according to (\ref{def2}). 
We gather only those terms from the right hand side of (\ref{qitta})
that involve the counter $i$, so that $\|v_{i+2} -
v_{i+1}\|_{1,\alpha} + \|w_{i+2} - w_{i+1}\|_{1,\alpha}$  is bounded,  up to
a multiplier independent of $i$, by:
\begin{equation}\label{farfour}
\begin{split}
 \Big(&B^{\frac{q^{i+1}-1}{q-1}}l_0^{q^{i+1}-1}\Big)^{1-\alpha -\alpha J(a-1) -a\gamma}
\bigg(\frac{C(1+\|\nabla v_0\|_0)^2}{b^{S-\gamma} B^{\frac{S(a-1)-a\gamma}{q-1}}}\bigg)^{\frac{i+1}{2}}
\frac{1}{\big(B^{\frac{1}{q-1}} l_0\big)^{(q^{i+1}-1)\frac{J (a-1) +3a\gamma}{q-1}}}\\
&\quad = \Big(B^{\frac{1}{q-1}} l_0\Big)^{(q^{i+1}-1)\big(1-\frac{J(a-1)+3a\gamma}{q-1}-\alpha-\alpha
  J(a-1) -a\gamma\big)}
\bigg(\frac{C(1+\|\nabla v_0\|_0)^2}{b^{S-\gamma} B^{\frac{S(a-1)-a\gamma}{q-1}}}\bigg)^{\frac{i+1}{2}}
\\ & \quad \leq \Bigg(C B^{\frac{S-\frac{a}{a-1}\gamma}{S+2J+\frac{5a}{a-1}\gamma}-\alpha -\alpha J(a-a)- a\gamma}
\frac{1+\|\nabla v_0\|_0}{\big(b^{S-\gamma} B^{\frac{S(a-1)-a\gamma}{q-1}}\big)^{1/2}}\Bigg)^{i+1},
\end{split}
\end{equation}
where we used that $q^{i+1}-1\ge (q-1)(i+1)$. Observing now the calculation: 
$$b^{S-\gamma} B^{\frac{S(a-1)-a\gamma}{q-1}} = C
b^{S- (S-\frac{a}{a-1}\gamma)\frac{S+2J+(2S+4J+1)\gamma}{S+2J+\frac{5a\gamma}{a-1}}
  -\gamma} = C b^{\mathcal{O}(\gamma)},$$ 
and denoting $\delta = \frac{S}{S+2J}-\alpha>0$, we hence see that for $(a-1),\gamma\ll 1$,
the right hand side of (\ref{farfour}) is further bounded by:
$$ \bigg(C B^{\delta/3} \frac{1+\|\nabla v_0\|_0}{\big(b^{S-\gamma}
  B^{\frac{S(a-1)-a\gamma}{q-1}}\big)^{1/2}}\bigg)^{i+1} \leq 
\bigg(\frac{C (1+\|\nabla v_0\|_0)}{b^{\frac{\delta}{2}\big(\frac{S}{2}+J+(S+2J+\frac{1}{2})\gamma\big)}}\bigg)^{i+1}. $$
Consequently, the asserted comparison with the converging power series
is achieved provided that the ratio above is less than $1$, which is implied by:
$$b^{\delta S/4}\geq C (1+\|\nabla v_0\|_0),$$
and which is consistent with the defining requirements for $b, l_0$ in (\ref{faras}).

\medskip

\noindent The second case, in step 5, is determined by $\frac{\beta}{2}\leq
\frac{S}{S+2J}$, which implies:
$$0<\alpha<\frac{\beta}{2},$$
There, the quantities $M_{i+1}$ are given according to (\ref{def1}), so
the terms in the right hand side of (\ref{qitta}) that involve the counter $i$, are:
\begin{equation*}
\begin{split}
 \Big(&B^{\frac{q^{i+1}}{q-1}}l_0^{q^{i+1}}\Big)^{1-\alpha -\alpha J(a-1) -a\gamma}
\Big(\frac{1+\|\nabla v_0\|_0}{l_0}\Big)^{i} \frac{1}{\big(B^{\frac{1}{q-1}} l_0\big)^{q^{i}(q-\frac{\beta}{2})}}\\
&\quad = \Big(B^{\frac{1}{q-1}}l_0\Big)^{q^i\big(\frac{\beta}{2}- q\alpha -q\alpha J(a-1) -qa\gamma\big)}
\Big(\frac{1+\|\nabla v_0\|_0}{l_0}\Big)^{i}. 
\end{split}
\end{equation*} 
Using the bound $q^i\ge (q-1)i+1$ valid for all $i\geq 0$, and denoting
$\delta=\frac{\beta}{2}-\alpha>0$, we estimate the above displayed
quantity, up to a multiplier independent of $i$, by:
\begin{equation*}
\begin{split}
\Bigg(\frac{B^{\frac{\beta}{2}- q\alpha -q\alpha J(a-1) -qa\gamma}(1+\|\nabla v_0\|_0)}{l_0}\Bigg)^{i}
\leq  \bigg(\frac{B^{\delta/2}(1+\|\nabla v_0\|_0)}{l_0}\bigg)^{i}
\leq \bigg(\frac{1+\|\nabla
  v_0\|_0}{l_0b^{\frac{\delta}{2}\big(\frac{S}{2}+J+(S+2J+\frac{1}{2})\gamma\big)}}\bigg)^{i}.
\end{split}
\end{equation*}
We see that the ratio of the related power series is less that $1$ provided that:
$$l_0b^{\delta S/4}>1+\|\nabla v_0\|_0,$$
which is consistent with the requirements in (\ref{faras2}) in step 7.
This ends the proof of the $\mathcal{C}^{1,\alpha}$ convergences and
completes the proof of Theorem \ref{th_NashKuiHol}.
\endproof

\section{A proof of Theorem \ref{th_final}}\label{sec5}

We first replace $\omega$ by its smooth superset, on which $v,w,A$ are
defined and (\ref{baqara}) holds. Without loss of generality, the same
is true on its closed $2l$-neighbourhood $\bar\omega+\bar B_{2l}(0)$,
for some $0<l<l_0$ that allows for the application of
Corollary \ref{th_NKH}. Fix $\epsilon\ll 1$, small as indicated below.
First, we let $v_1\in\mathcal{C}^\infty(\bar\omega +\bar
B_l(0),\R^k)$, $w_1\in\mathcal{C}^\infty(\bar\omega +\bar B_l(0),\R^2)$,
$A_1\in \mathcal{C}^\infty(\bar\omega +\bar B_l(0),\R^{2\times 2}_{\sym})$ with:
\begin{equation*}
\begin{split}
&\|v_1- v\|_1\leq \epsilon^5,\quad \|w_1- w\|_1\leq \epsilon^5,\quad
\|A_1- A\|_0\leq \epsilon^5,\\
&\mathcal{D}_1= A_1 -\big(\frac{1}{2}(\nabla v_1)^T\nabla v_1 + \sym\nabla w_1\big)
> c_1\Id_2 \quad\mbox{ on $\bar\omega +\bar B_l(0)\;$ for some } c_1>0.
\end{split}
\end{equation*}
The last property above follows from:
\begin{equation}\label{osta}
\begin{split}
\|\mathcal{D}_1-\mathcal{D}\|_0& \leq \|A_1-A\|_0 + \|\nabla (w_1-w)\|_0
+\frac{1}{2}\|\nabla (v_1-v)\|_0 \big(\|\nabla v_1\|_0+\|\nabla v\|_0\big)
\\ & \leq 3\epsilon^5 (1+\|\nabla v\|_0).
\end{split}
\end{equation}
Second, use Lemma \ref{th_approx_nonlocal} to get
$v_2\in\mathcal{C}^\infty(\bar\omega +\bar B_l(0),\R^k)$,
$w_2\in\mathcal{C}^\infty(\bar\omega +\bar B_l(0),\R^2)$ satisfying:
\begin{equation*}
\begin{split}
&\|v_2- v_1\|_0\leq \epsilon^5,\quad \|w_2- w_1\|_0\leq \epsilon^5,\\
& \|\nabla (v_2-v_1)\|_0\leq C\|\mathcal{D}_1\|_0^{1/2}\leq
C\big(\|\mathcal{D}\|_0^{1/2} + \epsilon^{5/2} + \|\nabla v\|_0^{1/2}\big),\\
&\mathcal{D}_2= A_1 -\big(\frac{1}{2}(\nabla v_2)^T\nabla v_2 + \sym\nabla w_2\big)
\quad\mbox{ satisfies }\; \|\mathcal{D}_2\|_0\leq \epsilon^5,
\end{split}
\end{equation*}
where we applied (\ref{osta}) in the gradient increment bound of $v$.

\smallskip

If the deficit $\mathcal{D}_3$, defined on $\bar\omega +\bar B_l(0)$ in:
$$\mathcal{D}_3 = A- \big(\frac{1}{2}(\nabla v_2)^T\nabla v_2 + \sym\nabla w_2\big) $$
is equivalently zero then we may simply
take $\tilde v= v_2$ and $\tilde w=w_2$ to satisfy the claim of the
Theorem. Otherwise, we use Corollary \ref{th_NKH} to get $v_2$, $w_2$ and $A$, since:
$$0<\|\mathcal{D}_3\|_0 \leq \|A-A_1\|_0 + \|\mathcal{D}_2\|_0\leq 2\epsilon^5\leq 1,$$
and consequently obtain $\tilde v\in\mathcal{C}^{1,\alpha}(\bar\omega,\R^k)$,
$\tilde w\in\mathcal{C}^{1,\alpha}(\bar\omega,\R^2)$ with the properties:
\begin{equation*}
\begin{split}
&\|\tilde v - v_2\|_0\leq C(1+\|\nabla v_2\|_0)^2\|\mathcal{D}_3\|_0^{1/4}
\leq C\big(1+\|\nabla v_0\|_0+\|\mathcal{D}\|_0\big)\epsilon^{5/4},\\ &
\|\tilde w- w_2\|_0\leq C(1+\|\nabla v_2\|_0)^3\|\mathcal{D}_3\|_0^{1/4}
\leq C\big(1+\|\nabla v_0\|_0^{3/2}+\|\mathcal{D}\|_0^{3/2}\big)\epsilon^{5/4},
\\ & A -\big(\frac{1}{2}(\nabla \tilde v)^T\nabla \tilde v + \sym\nabla
\tilde w\big) = 0 \quad\mbox{ in }\; \bar\omega.
\end{split}
\end{equation*} 
It now suffices to take $\epsilon$ sufficiently small (in function of
$\|\mathcal{D}\|_0$, $\|\nabla v\|_0$ and of $C$
that depend only on $\omega, k, A$ and $\alpha$), to replace the right
hand sides of both bounds above have $\epsilon^{6/5}$. Thus:
\begin{equation*}
\begin{split}
&\|\tilde v - v\|_0\leq \|\tilde v - v_2\|_0 + \| v_2 - v_1\|_0 + \|v_1 - v\|_0\leq
3\epsilon^{6/5} \leq \epsilon,\\ &
\|\tilde w - w\|_0\leq \|\tilde w - w_2\|_0 + \| w_2 - w_1\|_0 + \|w_1 - w\|_0
\leq 3\epsilon^{6/5}\leq \epsilon,
\end{split}
\end{equation*} 
for $\epsilon\ll 1$. The proof is done.
\endproof

\section{Application: energy scaling bound for thin films}\label{sec_appli}

In this section, we present an application of Theorem
\ref{th_final} towards obtaining a energy bound on a multidimensional non-Euclidean
elasticity functional on thin films with two-dimensional midplate. 
More precisely, given $\omega\subset\R^2$ we consider the
family of  domains:
$$\Omega^h = \big\{(x,z); ~x\in\omega, ~ z\in B(0,h)\subset\R^k\big\}\subset\R^{2+k},$$
parametrised by $h\ll 1$ and the family of Riemannian metrics on $\Omega^h$ of the form:
$$g^h=\Id_{2+k}+2h^{\gamma/2}S, \quad\mbox{ where } \gamma>0 \mbox{
  and } S\in\mathcal{C}^\infty(\bar\omega,\R^{(2+k)\times (2+k)}_\sym).$$
We then pose the problem of minimizing the following functionals, as  $h\to 0$:
$$\mathcal{E}^h(u) = \fint_{\Omega^h} W\big((\nabla u)(g^h)^{-1/2}\big)~\mathrm{d}(x,z)
\qquad\mbox{ for all }\; u\in H^1(\Omega^h,\R^{2+k}).$$
The density function $W:\R^{(d+k)\times (d+k)}\to [0,\infty]$ is assumed
to be $\mathcal{C}^2$-regular in the vicinity of the special
orthogonal group of rotations $\mathrm{SO}(2+k)$, to be equal to
$0$ at $\Id_{2+k}$, and to be frame-invariant in the sense that $W(RF)= W(F)$
for all $R\in \mathrm{SO}(2+k)$. The value $\mathcal{E}^h(u)$ may be
interpreted as the averaged pointwise deficit of $u$ from being the
orientation preserving isometric immersion of $g^h$ on
$\Omega^h$. When $k=1$ then $\mathcal{E}^h(u)$ models
the elastic energy (per unit thickness)  of the deformation $u$ of a thin
three-dimensional film with midplate $\omega$ and thickness $2h$ and
prestrained by $g^h$. Questions on the asymptotics
of minimizing configurations to $\mathcal{E}^h$ as
$h\to 0$, in function of the scaling exponent $\beta$ in: $\inf \mathcal{E}^h\sim
Ch^\beta$, received a lot of attention, particularly via techniques of dimension reduction and
$\Gamma$-convergence, starting with the seminal paper \cite{FJM}, see also
\cite{lew_book} and references therein. 
 
\medskip

\noindent Extending the analysis for $d=2$, $k=1$ in \cite[Theorem
1.4]{JBL} and following verbatim the general proof of \cite[Theorem
7.1]{lew_conv} (valid with arbitrary $d,k\geq 1$), we get:

\begin{theorem}\label{th_scaling}
Assume that $\omega\subset\R^2$ is an open, bounded domain and let
$k\geq 1$. Denote $s = \frac{4}{k}$. Then, there holds:
\begin{itemize}
\item[(i)] if $\gamma\ge 4$, then $\inf \mathcal{E}^h\leq Ch^{\beta}$, for
  every $\beta<2+\frac{\gamma}{2}$,

\item[(ii)] if $\gamma\in \big[\frac{4k}{3k+4},4\big)$, then
  $\inf\mathcal{E}^h\leq Ch^{\beta}$ for every $\beta< \frac{4k+\gamma (k+4)}{2k+4}$,

\item[(iii)] if $\gamma\in \big(0,\frac{4k}{3k+4}\big)$, then $\inf \mathcal{E}^h\leq Ch^{2\gamma}$.
\end{itemize}
\end{theorem}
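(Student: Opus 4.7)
\textbf{Proof proposal for Theorem \ref{th_scaling}.}

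The author states that this is proved exactly as \cite[Theorem 7.1]{lew_conv}, only with the old flexibility exponent $\frac{1}{1+d(d+1)/k}$ replaced by the new exponent $\frac{1}{1+4/k}$ from Theorem \ref{th_final}. My plan is hence to reproduce that upper-bound construction and to verify that the three regime thresholds match the ones asserted. The main ingredient to produce is a family of Kirchhoff--Love test deformations $u^h:\Omega^h\to\R^{2+k}$ of the schematic form
\begin{equation*}
u^h(x,z) = \bigl(x+h^{\gamma/2}w^l(x),\; h^{\gamma/4}v^l(x)\bigr) + (zN^l)(x) + \text{correctors},
\end{equation*}
where $v^l:\omega\to\R^k$ and $w^l:\omega\to\R^2$ are mollifications at scale $l=h^\rho$ of fields produced by convex integration, and $N^l(x)$ is a $g^h$-adapted normal frame chosen so that $(\nabla u^h)(g^h)^{-1/2}$ lies close to $\mathrm{SO}(2+k)$.

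Expanding $W$ about $\Id_{2+k}$ via frame invariance and the vanishing of $W$ on $\mathrm{SO}(2+k)$, the integrand splits into a tangential \emph{stretching} contribution of order $h^{\gamma}\big|A-\big(\tfrac{1}{2}(\nabla v^l)^T\nabla v^l+\sym\nabla w^l\big)\big|^2$ (where $A=-S_{\mathrm{tan}}$ encodes the tangential block of $S$), a \emph{bending} contribution of order $h^2|\nabla^2 v^l|^2+h^{2+\gamma/2}|\nabla^2 w^l|^2$, and higher-order remainders bounded exactly as in \cite[Section 7]{lew_conv}. In regime (iii), the trivial choice $v^l=w^l\equiv 0$ yields $\mathcal{E}^h\lesssim h^{2\gamma}$ directly. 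In regime (i), a smooth exact solution of $A=\tfrac12(\nabla v)^T\nabla v+\sym\nabla w$, produced by iterating Lemma \ref{th_approx_nonlocal} once the deficit is sufficiently positive-definite, annihilates the stretching and leaves only bending of size $h^{2+\gamma/2}$.

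The critical regime (ii) is the only place where Theorem \ref{th_final} enters: fix $\alpha<\frac{1}{1+4/k}$, produce $\tilde v\in\mathcal{C}^{1,\alpha}(\bar\omega,\R^k)$ and $\tilde w\in\mathcal{C}^{1,\alpha}(\bar\omega,\R^2)$ satisfying $A=\tfrac{1}{2}(\nabla\tilde v)^T\nabla\tilde v+\sym\nabla\tilde w$ exactly, set $v^l=\tilde v\ast\phi_l$, $w^l=\tilde w\ast\phi_l$, and invoke Lemma \ref{lem_stima}. The convolution estimates give $\|\nabla^2 v^l\|_0+\|\nabla^2 w^l\|_0\lesssim l^{\alpha-1}$, while the commutator bound \ref{stima4} applied to $\nabla\tilde v$ combined with the exactness of the equation yields $\|A-\big(\tfrac{1}{2}(\nabla v^l)^T\nabla v^l+\sym\nabla w^l\big)\|_0\lesssim l^{2\alpha}$. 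Substituting into the energy, $\mathcal{E}^h\lesssim h^{2\gamma}l^{4\alpha}+h^2l^{2(\alpha-1)}$; optimizing $\rho$ so that both terms balance and then letting $\alpha\uparrow\frac{1}{1+4/k}$ produces the claimed exponent $\frac{4k+\gamma(k+4)}{2k+4}$. The three piecewise formulas coincide at the boundary values $\gamma=\frac{4k}{3k+4}$ and $\gamma=4$, confirming the transitions.

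The main obstacle is not the convex integration itself, which is already delivered by Theorem \ref{th_final}, but the sharp commutator bound giving exponent $2\alpha$ (rather than $\alpha$) on the mollified stretching deficit: this doubling is precisely what ties the scaling improvement to the flexibility improvement and makes $\alpha=\frac{1}{1+4/k}$ the critical H\"older threshold. Beyond this, each step is the bookkeeping already performed in \cite[Section 7]{lew_conv}, valid without change in the present setting of dimension $d=2$ and arbitrary codimension $k\geq 1$.
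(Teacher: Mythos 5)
Your overall route is indeed the one behind the paper's proof (which is by reference to \cite[Theorem 7.1]{lew_conv}): a Kirchhoff--Love ansatz built from the exact $\mathcal{C}^{1,\alpha}$ solution of Theorem \ref{th_final}, mollified at scale $l=h^\rho$, with the commutator estimate controlling the mollified stretching deficit by $l^{2\alpha}$, and an optimization in $\rho$ with $\alpha\uparrow\frac{k}{k+4}$. However, the quantitative bookkeeping fails as written. With $v$ entering at amplitude $h^{\gamma/4}$ and $w$ at $h^{\gamma/2}$, the bending contribution is of order $h^{2+\gamma/2}|\nabla^2 v^l|^2+h^{2+\gamma}|\nabla^2 w^l|^2$, not $h^{2}|\nabla^2 v^l|^2+h^{2+\gamma/2}|\nabla^2 w^l|^2$, while the stretching contribution is $h^{\gamma}l^{4\alpha}$, not the $h^{2\gamma}l^{4\alpha}$ appearing in your final display (your two expressions for the stretching are also mutually inconsistent). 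Balancing either of your stated combinations gives, as $\alpha\to\frac{k}{k+4}$, the exponents $\frac{4k+4\gamma}{2k+4}$ resp. $\frac{4k+8\gamma}{2k+4}$, neither of which is the asserted $\frac{4k+\gamma(k+4)}{2k+4}$; only the correct combination $h^{\gamma}l^{4\alpha}+h^{2+\gamma/2}l^{2(\alpha-1)}$ produces it (with $\rho=\frac{2-\gamma/2}{2+2\alpha}$). So the central verification you claim to have performed does not actually go through with your powers of $h$.

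Two further steps are genuinely wrong or unjustified. In regime (iii), the trivial choice $v^l=w^l\equiv 0$ leaves a pointwise tangential misfit of order $h^{\gamma/2}$, hence energy of order $h^{\gamma}$, which is \emph{weaker} than the claimed $h^{2\gamma}$ (note $h^{2\gamma}\ll h^{\gamma}$): the bound $h^{2\gamma}$ still requires the full construction, and it appears as the floor coming from the uncancelled terms quadratic in $h^{\gamma/2}$ (e.g. $h^{\gamma}(\nabla w^l)^T\nabla w^l$ and the normal block of $S$); regime (iii) is precisely the range $\gamma\le\frac{4\alpha}{1+2\alpha}\to\frac{4k}{3k+4}$ where this floor dominates the optimized stretching--bending balance. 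In regime (i), iterating Lemma \ref{th_approx_nonlocal} does not yield a smooth exact solution of $\frac12(\nabla v)^T\nabla v+\sym\nabla w=A$: that lemma only produces smooth approximate solutions, and the iteration converges exclusively in $\mathcal{C}^{1,\alpha}$ with blow-up of second derivatives, while smooth exact solutions need not exist for general $A$. The case $\gamma\ge 4$ is instead handled by the same mollified $\mathcal{C}^{1,\alpha}$ solution with $l=h^{\epsilon}$, $\epsilon$ small, which is exactly why only the strict bound $\beta<2+\frac{\gamma}{2}$ is claimed there. The commutator step (exponent $2\alpha$ via the H\"older version of \ref{stima4}) is fine.
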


\end{document}